\documentclass[11pt]{amsart}
\usepackage{amssymb}
\usepackage{amsfonts}
\usepackage{color}

\usepackage{xy}
\xyoption{all}




\setlength{\textheight}{225mm} 
\setlength{\topmargin}{0.46cm}
\setlength{\textwidth}{150mm} 
\setlength{\oddsidemargin}{0.46cm}
\setlength{\evensidemargin}{0.46cm}
\setlength{\parindent}{0pt}
\newtheorem{theorem}{Theorem}[section]
\newtheorem{proposition}[theorem]{Proposition}
\newtheorem{lemma}[theorem]{Lemma}
\newtheorem{corollary}[theorem]{Corollary}
\newtheorem{definition}[theorem]{Definition}

\theoremstyle{remark}
\newtheorem{remark}[theorem]{Remark}

\renewenvironment{proof}{{\noindent\bf Proof.}}{\hfill $\Box$\par\vskip3mm}

\newcommand{\im}{{\rm Im}\,}

\newcommand{\Hom}{{\rm Hom}}

\newcommand{\Cc}{\mathcal{C}}

\newcommand{\Mm}{\mathcal{M}}

\newcommand{\Tt}{\mathcal{T}}
\newcommand{\Ss}{\mathcal{S}}
\def\RR{{\mathbb R}}

\def\NN{{\mathbb N}}

\begin{document}
\title[Representation Theory and Hopf Algebras]{Generalized Frobenius Algebras and the Theory of Hopf Algebras}%

\begin{abstract}
"Co-Frobenius" coalgebras were introduced as dualizations of Frobenius algebras. Recently, it was shown in \cite{I} that they admit left-right symmetric characterizations analogue to those of Frobenius algebras: a coalgebra $C$ is co-Frobenius if and only if it is isomorphic to its rational dual. We consider the more general quasi-co-Frobenius (QcF) coalgebras; in the first main result we show that these also admit symmetric characterizations: a coalgebra is QcF if it is weakly isomorphic to its (left, or equivalently right) rational dual $Rat(C^*)$, in the sense that certain coproduct or product powers of these objects are isomorphic. These show that QcF coalgebras can be viewed as generalizations of both co-Frobenius coalgebras and Frobenius algebras. Surprisingly, these turn out to have many applications to fundamental results of Hopf algebras. The equivalent characterizations of Hopf algebras with left (or right) nonzero integrals as left (or right) co-Frobenius, or QcF, or semiperfect or with nonzero rational dual all follow immediately from these results. Also, the celebrated uniqueness of integrals follows at the same time as just another equivalent statement. Moreover, as a by-product of our methods, we observe a short proof for the bijectivity of the antipode of a Hopf algebra with nonzero integral. This gives a purely representation theoretic approach to many of the basic fundamental results in the theory of Hopf algebras.
\end{abstract}

\author{Miodrag Cristian Iovanov\\}
\thanks{2000 \textit{Mathematics Subject Classification}. Primary 16W30;
Secondary 16S90, 16Lxx, 16Nxx, 18E40}
\thanks{$^*$ The author was partially supported by the contract nr. 24/28.09.07 with UEFISCU "Groups, quantum groups, corings and representation theory" of CNCIS, PN II (ID\_1002)\\THIS PAPER WAS COMPILED FROM A TEX-SUBMISSION TO arXiv}

\date{}
\keywords{coalgebra, Hopf algebra, integral, Frobenius, QcF, co-Frobenius}
\maketitle

\section*{Introduction}

A $K$ algebra $A$ over a field $K$ is called Frobenius if $A$ is isomorphic to $A^*$ as right $A$-modules. This is equivalent to there being an isomorphism of left $A$-modules between $A$ and $A^*$. This is the modern algebra language formulation for an old question posed by Frobenius. Given a finite dimensional algebra with a basis $x_1,\dots,x_n$, the left multiplication by an element $a$ induces a representation $A\mapsto End_K(A)=M_n(K)$, $a\mapsto (a_{ij})_{i,j}$ ($a_{ij}\in K$), where $a\cdot x_i=\sum\limits_{j=1}^na_{ij}x_j$. Similarly, the right multiplication produces a matrix $a_{ij}^\prime$ by writing $x_i\cdot a=\sum\limits_{j=1}^na_{ji}^\prime x_j$, $a_{ij}^\prime\in K$, and this induces another representation $A\ni a\mapsto (a_{ij}^\prime)_{i,j}$. Frobenius' problem came as the natural question of when the two representations are equivalent. Frobenius algebras occur in many different fields of mathematics, such as topology (the cohomology ring of a compact manifold with coefficients in a field is a Frobenius algebra by Poincar${\rm \acute{e}}$ duality), topological quantum field theory (there is a one-to-one correspondence between 2-dimensional quantum field theories and commutative Frobenius algebras; see \cite{Ab}), Hopf algebras (a finite dimensional Hopf algebra is a Frobenius algebra), and Frobenius algebras have subsequently developed into a research subfield of algebra.

\vspace{.5cm}

Co-Frobenius coalgebras were first introduced by Lin in \cite{L} as a dualization of Frobenius algebras. A coalgebra is left (right) co-Frobenius if there is a monomorphism of left (right) $C^*$-modules $C\subseteq C^*$. However, unlike the algebra case, this concept is not left-right symmetric, as an example in \cite{L} shows. Nevertheless, in the case of Hopf algebras, it was observed that left co-Frobenius implies right co-Frobenius. Also, a left (or right) co-Frobenius coalgebra can be infinite dimensional, while a Frobenius algebra is necessarily finite dimensional. Co-Frobenius coalgebras are coalgebras that are both left and right co-Frobenius. It recently turned out that this notion of co-Frobenius has a nice characterization that is analogue to the characterizations of Frobenius algebras and is also left-right symmetric: a coalgebra $C$ is co-Frobenius if it is isomorphic to its left (or equivalently to its right) rational dual $Rat({}_C^*C^*)$ (equivalently $C\simeq Rat(C^*_{C^*})$; see \cite{I}). This also allowed for a categorical characterization which is again analogue to a characterization of Frobenius algebras: an algebra $A$ is Frobenius iff the functors $\Hom_A(-,A)$ ("the $A$-dual functor") and $\Hom_K(-,K)$ ("the $K$-dual functor") are naturally isomorphic. Similarly, a coalgebra is co-Frobenius if the $C^*$-dual $\Hom_{C^*}(-,C^*)$ and the $K$-dual $\Hom_K(-,K)$ functors are isomorphic on comodules. If a coalgebra $C$ is finite dimensional then it is co-Frobenius if and only if $C$ is Frobenius, showing that the co-Frobenius coalgebras (or rather their dual) can be seen as the infinite dimensional generalization of Frobenius algebras. 

\vspace{.5cm}

Quasi-co-Frobenius (QcF) coalgebras were introduced in \cite{NT1} (further investigated in \cite{NT2}), as a natural dualization of quasi-Frobenius algebras (QF algebras), which are algebras that are self-injective, cogenerators and artinian to the left, equivalently, all these conditions to the right. However, in order to allow for infinite dimensional QcF coalgebras (and thus obtain more a general notion), the definition was weaken to the following: a coalgebra is said to be left (right) QcF if it embeds in $\coprod\limits_IC^*$ (a direct coproduct of copies of $C^*$) as left (right) $C^*$-modules. These coalgebras were shown to bear many properties that were the dual analogue of the properties of QF algebras. Again, this turned out not to be a left-right symmetric concept, and QcF coalgebras were introduced to be the coalgebras which are both left and right QcF. Our first goal is to note that the results and techniques of \cite{I} can be extended and applied to obtain a symmetric characterization of these coalgebras. In the first main result we show that a coalgebra is QcF if and only if $C$ is "weakly" isomorphic to $Rat({}_{C^*}C^*)$ as left $C^*$-modules, in the sense that some (co)product powers of these objects are isomorphic, and this is equivalent to asking that $C^*$ is "weakly" isomorphic to $Rat(C^*_{C^*})$ (its right rational dual) as right $C^*$-modules. In fact, it is enough to have an isomorphism of countable powers of these objects. This also allows for a nice categorical characterization, which states that $C$ is QcF if and only if the above $C^*$-dual and $K$-dual functors are (again) "weakly" isomorphic. Besides realizing QcF coalgebras as a left-right symmetric concept which is a generalization of both Frobenius algebras, co-Frobenius co-algebras and co-Frobenius Hopf algebras, we note that this also provides this characterization of finite dimensional quasi-Frobenius algebras: $A$ is QF iff $A$ and $A^*$ are weakly isomorphic in the above sense, equivalently, $\coprod_\NN A\simeq \coprod_\NN A^*$. 

\vspace{.5cm}

Thus these results give a nontrivial generalization of Frobenius algebras and of quasi-Frobenius algebras, and the algebras arising as dual of QcF coalgebras are entitled to be called Generalized Frobenius Algebras, or rather Generalized QF Algebras.

\vspace{.5cm}

These turn out to have a wide range of applications to Hopf algebras. In the theory of Hopf algebras, some of the first fundamental results were concerned with the characterization of Hopf algebras having a nonzero integral. These are in fact generalizations of well known results from the theory of compact groups. Recall that if $G$ is a (locally) compact group, then there is a unique left invariant (Haar) measure and an associated integral $\int$. Considering the algebra $R_c(G)$ of continuous representative functions on $G$, i.e. functions $f:G\rightarrow \RR$) such that there are $f_i,g_i:G\rightarrow K$ for $i=1,n$ with $f(xy)=\sum\limits_{i=1}^n f_i(x)g_i(y)$, then this becomes a Hopf algebra with multiplication given by the usual multiplication of functions, comultiplication given by $f\mapsto \sum\limits_{i=1}^nf_i\otimes g_i$ and antipode $S$ given by the composition with the taking of inverses $S(f)(x)=f(x^{-1})$. Then, the integral $\int$ of $G$ restricted to $R_c(G)$ becomes an element of $R_c(G)^*$ that has the following property: $\alpha\cdot\int=\alpha({\mathbf 1})\int$, with ${\mathbf 1}$ being the constant 1 function. Such an element in a general Hopf algebra is called a left integral, and Hopf algebras (quantum groups) having a nonzero left integral can be viewed as ("quantum") generalizations of compact groups (the Hopf Algebra can be thought of as the algebra of continuous representative functions on some abstract quantum space). Among the first the fundamental results in Hopf algebras was (were) the fact(s) that the existence of a left integral is equivalent to the existence of a right integral, and these are equivalent to the (co)representation theoretic properties of the underlying coalgebra of $H$ of being left co-Frobenius, right co-Frobenius, left (or right) QcF, or having nonzero rational dual. These were results obtained in several initiating research papers on Hopf algebras \cite{LS, MTW, R, Su, Sw1}. Then the natural question of whether the integral in a Hopf algebra is unique arose (i.e. the space of left integrals $\int_l$ or that of right integrals $\int_r$ is one dimensional), which would generalize the results from compact groups. The answer to this question turned out positive, as it was proved by Sullivan in \cite{Su}; alternate proofs followed afterwards (see \cite{Ra, St}). Another very important result is that of Radford, who showed that the antipode of a Hopf algebra with nonzero integral is always bijective. 

\vspace{.5cm}

We re-obtain all these results as a byproduct of our co-representation theoretic results and generalizations of Frobenius algebras; they will turn out to be an easy application of these general results. We also note a very short proof of the bijectivity of the antipode by constructing a certain derived comodule structure on $H$, obtained by using the antipode and the so called distinguished grouplike element of $H$, and the properties of the comodule $H^H$. The only way we need to use the Hopf algebra structure of $H$ is through the classical Fundamental theorem of Hopf modules which gives an isomorphism of $H$-Hopf modules $\int_l\otimes H\simeq Rat({}_{H^*}H^*)$; however, we will only need to use that this is a isomorphism of comodules. We thus find almost purely representation theoretic proofs of all these classical fundamental results from the theory of Hopf algebras, which become immediate easy applications of the more general results on the "generalized Frobenius algebras". Thus, the methods and results in this paper are also intended to emphasize the potential of these representation theoretic approaches. 

\vspace{.5cm}

\section{Quasi-co-Frobenius Coalgebras}

Let $C$ be a coalgebra over a field $K$. We denote by $\Mm^C$ (respectively ${}^C\Mm$) the category of right (left) $C$-comodules and by ${}_{C^*}\Mm$ (respectively $\Mm_{C^*}$) the category of left (right) $C^*$-modules. We use the simplified Sweedler's$ \sigma$-notation for the comultiplication $\rho:M\rightarrow M\otimes C$ of a $C$-comodule $M$, $\rho(m)=m_0\otimes m_1$. We will always use the inclusion of categories $\Mm^C\hookrightarrow {}_{C^*}\Mm$, where the left $C^*$-module structure on $M$ is given by $c^*\cdot m=c^*(m_1)m_0$. 

Let ${\mathcal S}$ be a set of representatives for the types of isomorphism of simple left $C$-comodules and ${\mathcal T}$ be a set of representatives for the simple right comodules. It is well known that we have an isomorphism of left $C$-comodules (equivalently right $C^*$-modules) $C\simeq \bigoplus\limits_{S\in \Ss}E(S)^{n(S)}$, where $E(S)$ is an injective envelope of the left $C$-comodule $S$ and $n(S)$ are positive integers. Similarly, $C\simeq \bigoplus\limits_{T\in\Tt}E(T)^{p(T)}$ in $\Mm^C$, with $p(T)\in\NN$ (we use the same notation for envelopes of left modules and for those of right modules, as it will always be understood from the context what type of modules we refer to). Also $C^*\simeq \prod\limits_{S\in\Ss}E(S)^*$ in ${}_{C^*}\Mm$ and $C^*\simeq \prod\limits_{T\in\Tt}E(T)^*$ in $\Mm_{C^*}$. We refer the reader to \cite{A}, \cite{DNR} or \cite{Sw} for these results and other basic facts of coalgebras. We will use the finite topology on duals of vector spaces: given a vector space $V$, this is the linear topology on $V^*$ that has a basis of neighbourhoods of $0$ formed by the sets $F^\perp=\{f\in V^*\mid\,f\vert_F=0\}$ for finite dimensional subspaces $F$ of $V$. We also write $W^\perp=\{x\in V\mid f(x)=0,\,\forall f\in W\}$ for subsets $W$ of $V^*$. Any topological reference will be with respect to this topology. \\
For a module $M$, we convey to write $M^{(I)}$ for the coproduct (direct sum) of $I$ copies of $M$ and $M^I$ for the product of $I$ copies of $M$. 
We recall the following definition from \cite{NT1}

\begin{definition}
A coalgebra $C$ is called right (left) quasi-co-Frobenius, or shortly right QcF coalgebra, if there is a monomorphism $C\hookrightarrow (C^*)^{(I)}$ of right (left) $C^*$-modules. $C$ is called QcF coalgebra if it is both a left and right QcF coalgebra.
\end{definition}

Recall that a coalgebra $C$ is called right semiperfect if the category $\Mm^C$ of right $C$-comodules is semiperfect, that is, every right $C$-comodule has a projective cover. This is equivalent to the fact that $E(S)$ is finite dimensional for all $S\in\Ss$ (see \cite{L}). In fact, this is the definition we will need to use. For convenience, we also recall the following very useful results on injective (projective) comodules, the first one originally given in \cite{D} Proposition 4, p.34 and the second one being Lemma 15 from \cite{L}:

\vspace*{.3cm}
{\cite[\bf Proposition 4]{D}} {\emph Let $Q$ be a finite dimensional right $C$-comodule. Then $Q$ is injective (projective) as a left $C^*$-module if and only if it is injective (projective) as right $C$-comodule.}

\vspace{.3cm}
{\cite[\bf Lemma 15]{L}} {\emph Let $M$ be a finite-dimensional right $C$-comodule. Then $M$ is an injective right $C$-comodule if and only if $M^*$ is a projective left $C$-comodule.}

\vspace{.3cm}
We note the following proposition that will be useful in what follows; (i)$\Leftrightarrow$(ii) was given in \cite{NT1} and our approach also gives here a different proof, along with the new characterizations.

\begin{proposition}\label{p1}
Let $C$ be a coalgebra. Then the following assertions are equivalent:\\
(i) $C$ is a right QcF coalgebra.\\
(ii) $C$ is a right torsionless module, i.e. there is a monomorphism $C\hookrightarrow (C^*)^I$.\\
(iii) There exist a dense morphism $\psi:C^{(I)}\rightarrow C^*$, that is, the image of $\psi$ is dense in $C^*$.\\
(iv) $\forall S\in\Ss$, $\exists T\in \Tt$ such that $E(S)\simeq E(T)^*$.
\end{proposition}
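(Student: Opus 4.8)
The plan is to prove $(i)\Rightarrow(ii)$ trivially, $(ii)\Leftrightarrow(iii)$ by $K$-linear duality, and then to close the loop through $(ii)\Rightarrow(iv)\Rightarrow(i)$. The first implication is immediate: the canonical inclusion $(C^*)^{(I)}\hookrightarrow(C^*)^I$ is a monomorphism of right $C^*$-modules, so $(i)$ gives $(ii)$ with the same index set.

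For $(ii)\Leftrightarrow(iii)$ I would use that $\Hom_K(-,K)$ is exact and interchanges coproducts with products, the biduality map $C\hookrightarrow C^{**}$, and the elementary fact that a subspace $W\subseteq C^*$ is dense for the finite topology if and only if $W^\perp=0$ in $C$ (equivalently $(W^\perp)^\perp=C^*$). A morphism $\psi\colon C^{(I)}\to C^*$ is nothing but a family $(\psi_i)_{i\in I}$ of morphisms $\psi_i\colon C\to C^*$, and the same family assembled into a product is a morphism $\phi\colon C\to(C^*)^I$; one then has $\Ker\phi=\bigcap_i\Ker\psi_i$ while $(\im\psi)^\perp=\bigcap_i(\im\psi_i)^\perp$, and I would match $\Ker\psi_i$ with $(\im\psi_i)^\perp$ using the explicit description of right $C^*$-module maps $C\to C^*$ (each being ``right translation by'' an element of $C^*$). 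This shows $\phi$ is injective exactly when $\im\psi$ is dense, giving $(ii)\Leftrightarrow(iii)$, with the same index set on both sides.

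The implication $(iv)\Rightarrow(i)$ is short: by the structure theorems $C\cong\bigoplus_{S\in\Ss}E(S)^{n(S)}$ in $\Mm_{C^*}$, and each $E(T)^*$ is a direct summand of $C^*\cong\prod_{T\in\Tt}E(T)^*$ in $\Mm_{C^*}$; replacing each $E(S)$ by the $E(T_S)^*$ furnished by $(iv)$ yields a monomorphism $C\hookrightarrow\bigoplus_S(C^*)^{n(S)}=(C^*)^{(J)}$. The real content is $(ii)\Rightarrow(iv)$. Fix $S\in\Ss$. As $E(S)$ is a direct summand of $C$, $(ii)$ gives a monomorphism $E(S)\hookrightarrow(C^*)^I\cong\prod_{T}E(T)^*$ of right $C^*$-modules. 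Since $E(S)$ is the injective envelope of the simple $S$ in ${}^C\Mm$, its socle is $S$, simple and essential, so every nonzero submodule of $E(S)$ contains $S$; hence at least one coordinate projection $E(S)\to E(T)^*$ is injective — otherwise all of them would have nonzero kernel, these kernels would all contain $S$, and so would their intersection, contradicting the monomorphism into the product. Thus $E(S)\hookrightarrow E(T)^*$ for some $T\in\Tt$. To turn this into an isomorphism I would show that $E(T)^*$ is finite-dimensional; then $E(T)^*$ is a finite-dimensional indecomposable right $C^*$-module (its endomorphism ring is $\End(E(T))^{\mathrm{op}}$, local since $E(T)$ is indecomposable injective), it is rational, and $E(S)$ — injective in ${}^C\Mm$ and contained in the comodule $E(T)^*$ — splits off it; hence $E(S)\cong E(T)^*$.

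The main obstacle is exactly this finiteness: the statement $E(S)\simeq E(T)^*$ already forces $E(T)$ to be finite-dimensional (the full linear dual of an infinite-dimensional module is not rational), so $(ii)\Rightarrow(iv)$ genuinely requires knowing that a right QcF coalgebra is semiperfect, after which Proposition~4 of \cite{D} and Lemma~15 of \cite{L} control the comodule/module dualities. My plan to get it: since rational modules are closed under quotients and $Rat$ commutes with coproducts, a monomorphism $C\hookrightarrow(C^*)^{(I)}$ factors as $C\hookrightarrow R^{(I)}$ with $R:=Rat(C^*_{C^*})$; because $\mathrm{soc}\,E(S)=S$ is finitely generated, the image of $E(S)$ in $R^{(I)}$ already lies in a finite sub-coproduct $R^{(F)}$ (the composite $E(S)\to R^{(I\setminus F)}$ vanishes on $S$, hence on $E(S)$), and the same socle argument then gives $E(S)\hookrightarrow R$, which splits by injectivity of $E(S)$; one then reads off from the structure of $R$ and $C^*\cong\prod_T E(T)^*$ that the relevant injective envelopes are finite-dimensional (this semiperfectness of QcF coalgebras is also available from \cite{NT1}). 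Everything else in the argument is soft.
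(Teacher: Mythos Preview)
There are two genuine gaps, both in the passage from $(ii)$ to $(iv)$.

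First, your semiperfectness argument is circular and contains an actual error. You want to prove $(ii)\Rightarrow(iv)$, but the argument you sketch starts from a monomorphism $C\hookrightarrow (C^*)^{(I)}$ into a \emph{coproduct}, which is condition $(i)$; you have not yet shown $(ii)\Rightarrow(i)$, so the loop does not close. Worse, the step ``the composite $E(S)\to R^{(I\setminus F)}$ vanishes on $S$, hence on $E(S)$'' is false: essentiality of $S$ in $E(S)$ gives the implication in the \emph{other} direction (a map that is injective on $S$ is injective on $E(S)$), but a comodule map killing $S$ need not be zero --- the quotient $E(S)\twoheadrightarrow E(S)/S$ is a counterexample whenever $E(S)\neq S$. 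So even granting $(i)$, your route to finite-dimensionality does not work. The paper instead extracts semiperfectness from $(iii)$: since $C^{(I)}$ is rational, $\im\psi\subseteq Rat(C^*)$, and density of $\im\psi$ forces $Rat(C^*)$ to be dense in $C^*$, which is a known characterization of right semiperfectness. This gives $\dim E(S)<\infty$ for $S\in\Ss$ directly.

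Second, once you have $E(S)\hookrightarrow E(T)^*$, you try to make $E(T)^*$ finite-dimensional so that it becomes a comodule and $E(S)$ splits as an injective \emph{comodule}. But finiteness of $E(T)$ is \emph{left} semiperfectness, which right QcF does not give. The correct move is the one the paper makes: use $\dim E(S)<\infty$ (right semiperfectness) together with \cite[Proposition 4]{D} to conclude that $E(S)$ is injective as a right $C^*$-\emph{module}; then $E(S)$ splits off $E(T)^*$ in $\Mm_{C^*}$ regardless of whether $E(T)^*$ is rational, and indecomposability of $E(T)^*$ finishes.

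A smaller point: in your $(ii)\Leftrightarrow(iii)$, taking the \emph{same} family $(\psi_i)$ and assembling it two ways does not give $\Ker\psi_i=(\im\psi_i)^\perp$; for a right $C^*$-linear $f\colon C\to C^*$ these are the left and right radicals of the associated bilinear form, and the balanced condition does not force them to coincide. The paper's construction is a transpose, $\psi(x)(c)=\varphi(c)(x)$, for which $\varphi$ injective $\Leftrightarrow(\im\psi)^\perp=0$ is immediate.
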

\begin{proof}
(i)$\Rightarrow$(ii) obvious.\\
(ii)$\Leftrightarrow$(iii) Let $\varphi:C\rightarrow (C^*)^I\simeq (C^{(I)})^*$ be a monomorphism of right $C^*$-modules. Let $\psi:C^{(I)}\rightarrow C^*$ be defined by $\psi(x)(c)=\varphi(c)(x)$. It is straightforward to see that the fact that $\varphi$ is a morphism of left $C^*$-modules is equivalent to $\psi$ being a morphism in $\Mm_{C^*}$, and that $\varphi$ injective is equivalent to $(\im \psi)^\perp=0$, which is further equivalent to $\im \psi$ is dense in $C^*$ (for example, by \cite{DNR} Corollary 1.2.9).\\
(ii),(iii)$\Rightarrow$(iv) As $\im\psi\subseteq Rat({}_{C^*}C^*)$, $Rat({}_{C^*}C^*)$ is dense in $C^*$, so $C$ is right semiperfect by Proposition 3.2.1 \cite{DNR}. Thus $E(S)$ is finite dimensional $\forall S\in \Ss$. Also by (ii) there is a monomorphism $\iota:E(S)\hookrightarrow \prod\limits_{j\in J}E(T_j)^*$ with $T_j\in \Tt$, and as ${\rm dim}\,E(S)<\infty$ there is a monomorphism to a finite direct sum $E(S)\hookrightarrow \prod\limits_{j\in F}E(T_j)^*$ ($F$ finite, $F\subseteq J$). Indeed, if $p_j$ are the projections of $\prod\limits_{j\in J}E(T_j)^*$, then note that $\bigcap\limits_{j\in J} \ker p_j\circ \iota=0$, so there must be $\bigcap\limits_{j\in F}\ker p_j\circ\iota=0$ for a finite $F\subseteq J$. Then $E(S)$ is injective also as right $C^*$-modules (see for example  \cite{DNR}, Corollary 2.4.19), and so $E(S)\oplus X=\bigoplus\limits_{j\in F}E(T_j)^*$ for some $X$. By \cite[Lemma 1.4]{I}, the $E(T_j)^*$'s are local indecomposable, and as they are also cyclic projective we eventually get $E(S)\simeq E(T_j)^*$ for some $j\in F$. This can be easily seen by noting that there is at least one nonzero morphism $E(S)\hookrightarrow E(S)\oplus X=\bigoplus\limits_{j\in F}E(T_j)^*\rightarrow\bigoplus\limits_{j\in F}T_j^*\rightarrow S_k$ (one looks at Jacobson radicals) and this projection then lifts to a morphism $f:E(S)\rightarrow E(T_k)^*$ as $E(S)$ is obviously projective; this has to be surjective since $E(T_k)^*$ is cyclic local, and then $f$ splits; hence $E(S)\simeq E(T_k)^*\oplus Y$ with $Y=0$ as $E(S)$ is indecomposable.\\
(iv)$\Rightarrow$(i) Any isomorphism $E(S)\simeq E(T)^*$ implies $E(S)$ finite dimensional because then $E(T)^*$ is cyclic rational; therefore $E(T)\simeq E(S)^*$. Thus for each $S\in \Ss$ there is exactly one $T\in \Tt$ such that $E(S)\simeq E(T)^*$. If $\Tt'$ is the set of these $T$'s, then:
\begin{eqnarray*}
C & \simeq & \bigoplus\limits_{S\in\Ss}E(S)^{n(S)} \hookrightarrow \bigoplus\limits_{S\in\Ss}E(S)^{(\NN)} \simeq \bigoplus\limits_{T\in\Tt'\subseteq \Tt}(E(T)^*)^{(\NN)}\\
& \hookrightarrow & (\bigoplus\limits_{T\in \Tt}(E(T)^*)^{p(T)})^{(\NN)} \subseteq (\prod\limits_{T\in \Tt}(E(T)^*)^{p(T)})^{(\NN)}=C^*{}^{(\NN)}
\end{eqnarray*}
\end{proof}

From the above proof, we see that when $C$ is right QcF, the $E(S)$'s are finite dimensional projective for $S\in\Ss$, and we also conclude the following result already known from \cite{NT1} (in fact these conditions are even equivalent); see also \cite[Theorem 3.3.4]{DNR}.

\begin{corollary}\label{c1}
If $C$ is right QcF, then $C$ is also right semiperfect and projective as right $C^*$-module.
\end{corollary}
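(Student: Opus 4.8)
The plan is to read off both conclusions from Proposition \ref{p1} together with the computations already carried out in its proof; essentially everything needed has been produced there. For right semiperfectness, I would use the equivalence (i)$\Leftrightarrow$(iii): being right QcF yields a dense morphism $\psi\colon C^{(I)}\to C^*$. Since $C$ (hence $C^{(I)}$, hence $\im\psi$) is a rational $C^*$-module, $\im\psi$ is contained in the rational part $Rat(C^*)$ of $C^*$, so $Rat(C^*)$ is dense in $C^*$; by \cite[Proposition 3.2.1]{DNR} this is precisely the condition that $C$ be right semiperfect, i.e. that $E(S)$ be finite dimensional for every $S\in\Ss$ — exactly the step already executed when proving (ii),(iii)$\Rightarrow$(iv).

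For projectivity of $C$ as a right $C^*$-module, I would invoke characterization (iv): each $E(S)$, $S\in\Ss$, is isomorphic to $E(T)^*$ for some $T\in\Tt$. In the decomposition $C^*\simeq\prod_{T\in\Tt}(E(T)^*)^{p(T)}$ of $C^*$ as a right $C^*$-module, every $E(T)^*$ (with $p(T)\ge 1$) is a direct summand of the free module $C^*$ and is therefore projective as a right $C^*$-module; hence so is each $E(S)$. Since $C\simeq\bigoplus_{S\in\Ss}E(S)^{n(S)}$ as right $C^*$-modules and an arbitrary direct sum of projective modules is projective, $C$ is projective as a right $C^*$-module. (Alternatively, using the finite-dimensionality obtained in the previous step: $E(T)$ is a finite dimensional injective right $C$-comodule, so by Lemma 15 of \cite{L} the comodule $E(T)^*\simeq E(S)$ is projective as a left $C$-comodule, and then the left-hand analogue of Proposition 4 of \cite{D} upgrades this to projectivity as a right $C^*$-module.)

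I do not expect a genuine obstacle: this corollary is a bookkeeping consequence of Proposition \ref{p1}, and the remark preceding it already records that the $E(S)$ are finite dimensional and projective. The only points that need a little attention are (a) checking that $\im\psi$ really consists of rational functionals, so that density of the image forces $Rat(C^*)$ dense in $C^*$; (b) carrying the cited results of \cite{D} and \cite{L}, stated for right comodules, over to the left comodules $E(S)$ occurring in the decomposition of $C$; and (c) the elementary fact that an arbitrary direct sum of projectives is projective. All three are routine.
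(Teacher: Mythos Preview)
Your proposal is correct and follows exactly the approach the paper intends: the corollary is stated immediately after Proposition~\ref{p1} with the remark that ``from the above proof, we see that when $C$ is right QcF, the $E(S)$'s are finite dimensional projective for $S\in\Ss$,'' and you have simply unpacked those two observations (density of $\im\psi$ inside $Rat({}_{C^*}C^*)$ for semiperfectness, and $E(S)\simeq E(T)^*$ a summand of $C^*$ for projectivity) just as they occur in the proof of (ii),(iii)$\Rightarrow$(iv). Your alternative route via \cite[Lemma 15]{L} and \cite[Proposition 4]{D} is also fine and in the same spirit.
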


We also immediately conclude the following

\begin{corollary}\label{c2}
A coalgebra $C$ is QcF if and only if the application $$\{E(S)\mid S\in \Ss\}\ni Q\mapsto Q^*\in\{E(T)\mid T\in\Tt\}$$ is well defined and bijective.
\end{corollary}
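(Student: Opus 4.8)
The plan is to read this corollary off from Proposition~\ref{p1}, applied on both sides, together with the finiteness and uniqueness information that already surfaces in its proof. I will lean on three things. First, $C$ is QcF exactly when it is both left and right QcF. Second, Proposition~\ref{p1} in the form (i)$\Leftrightarrow$(iv) says ``$C$ is right QcF'' $\Leftrightarrow$ ``for every $S\in\Ss$ there is $T\in\Tt$ with $E(S)\simeq E(T)^*$ as right $C^*$-modules'', and the left-hand mirror says ``$C$ is left QcF'' $\Leftrightarrow$ ``for every $T\in\Tt$ there is $S\in\Ss$ with $E(T)\simeq E(S)^*$ as left $C^*$-modules''. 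Third, from the proof of Proposition~\ref{p1}: an isomorphism $E(S)\simeq E(T)^*$ forces $E(S)$, hence $E(T)$, to be finite dimensional, the $T$ attached to a given $S$ is then unique, and dualizing turns $E(S)\simeq E(T)^*$ into $E(T)\simeq E(S)^*$; moreover right QcF makes every $E(S)$ ($S\in\Ss$) finite dimensional (Corollary~\ref{c1}), and symmetrically left QcF makes every $E(T)$ ($T\in\Tt$) finite dimensional.

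For the ``only if'' direction I would assume $C$ is QcF. Right QcF-ness gives, for each $S\in\Ss$, a unique $T\in\Tt$ with $E(S)\simeq E(T)^*$, both finite dimensional, and $E(T)\simeq E(S)^*$; so $Q\mapsto Q^*$ really does send $\{E(S)\mid S\in\Ss\}$ into $\{E(T)\mid T\in\Tt\}$ (well-definedness). It is injective: from $E(S_1)^*\simeq E(S_2)^*$, finite dimensionality permits dualizing to $E(S_1)\simeq E(S_2)$, whence $S_1\simeq S_2$ by comparing socles. It is surjective: left QcF-ness says every $E(T)$, $T\in\Tt$, is isomorphic to some $E(S)^*$. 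Hence the correspondence is a well-defined bijection.

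For the ``if'' direction I would assume $Q\mapsto Q^*$ is well defined and bijective. Well-definedness together with surjectivity says precisely that each $E(T)$, $T\in\Tt$, is isomorphic to $E(S)^*$ for some $S\in\Ss$, which by the left-hand form of Proposition~\ref{p1} is exactly left QcF-ness; in particular every $E(T)$, $T\in\Tt$, is finite dimensional. Now fix $S\in\Ss$. By well-definedness $E(S)^*\simeq E(T)$ for some $T\in\Tt$; this $E(T)$ is finite dimensional, hence so is $E(S)^*$ and hence so is $E(S)$, so $E(S)$ is reflexive and $E(S)\simeq E(S)^{**}\simeq E(T)^*$. That is condition (iv) of Proposition~\ref{p1}, so $C$ is right QcF as well, i.e.\ $C$ is QcF.

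I expect the only genuinely delicate point to be the last one. The hypothesis hands us relations of the shape $E(S)^*\simeq E(T)$, whereas the right QcF criterion of Proposition~\ref{p1} needs the dualized shape $E(S)\simeq E(T)^*$, and the two are genuinely inequivalent for infinite dimensional comodules. What unlocks it is that the surjectivity half of the hypothesis, on its own, already forces $C$ to be left QcF, and left QcF is exactly what makes the relevant injective indecomposables finite dimensional; once finiteness is available, switching between the two shapes, invoking uniqueness of the partner module, and the socle bookkeeping are all routine consequences of Proposition~\ref{p1} and its proof. (This also shows that ``well defined and surjective'' would already have sufficed, with injectivity then automatic, but I would present the argument as above to match the statement.)
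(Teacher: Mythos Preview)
Your argument is correct and is essentially the intended one: the paper records no separate proof, treating the corollary as an immediate consequence of Proposition~\ref{p1} (applied on each side) together with the finite-dimensionality noted in its proof and in Corollary~\ref{c1}. Your write-up simply makes that ``immediate'' explicit, and your remark that well-definedness plus surjectivity already suffices is a pleasant bonus.
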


\begin{definition}\label{def}
(i) Let $\Cc$ be a category having products. We say that $M,N\in \Cc$ are weakly $\pi$-isomorphic if and only if there are some sets $I,J$ such that $M^I\simeq N^J$; we write $M\stackrel{\pi}{\sim}N$.\\
(ii) Let $\Cc$ be a category having coproducts. We say that $M,N\in \Cc$ are weakly $\sigma$-isomorphic if and only if there are some sets $I,J$ such that $M^{(I)}\simeq N^{(J)}$; we write $M\stackrel{\sigma}{\sim}N$.
\end{definition}

The study of objects of a (suitable) category $\Cc$ up to $\pi$(respectively $\sigma$)-isomorphism is the study of the localization of $\Cc$ with respect to the class of all $\pi$(or $\sigma$)-isomorphisms.

Recall that in the category ${}^C\Mm$ of left comodules, coproducts are the usual direct sums of (right) $C^*$-modules and the product $\prod\limits^C$ is given, for a family of comodules $(M_i)_{i\in I}$, by $\prod\limits_{i\in I}^CM_i=Rat(\prod\limits_{i\in I}M_i)$. 




\vspace{.3cm}
For easy future reference, we introduce the following conditions:

(C1) $C\stackrel{\sigma}{\sim}Rat(C^*_{C^*})$ in ${}^C\Mm$ (or equivalently, in $\Mm_{C^*}$).\\
(C2) $C\stackrel{\pi}{\sim}Rat(C^*_{C^*})$ in ${}^C\Mm$.\\
(C3) $Rat(C^I)\simeq Rat(C^*{}^J)$ for some sets $I,J$.\\
(C2') $C\stackrel{\pi}{\sim}Rat(C^*_{C^*})$ in $\Mm_{C^*}$.

\begin{lemma}\label{l1}
Either one of the conditions (C1), (C2), (C3), (C2') implies that $C$ is QcF (both left and right).
\end{lemma}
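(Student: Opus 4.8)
The plan is to split the statement into a formal reduction and a substantive core. The reduction: show that each of (C1), (C2), (C3), (C2') forces \emph{both} that $C$ is right QcF \emph{and} that $Rat(C^*_{C^*})$ is an injective object of ${}^C\Mm$. The core: show that these two properties together already imply that $C$ is QcF.

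For the reduction, I would first read off "right QcF": in every case $C$, being a rational right $C^*$-module, embeds into a coproduct of copies of $Rat(C^*_{C^*})$ (case (C1)) or into the ${}^C\Mm$-product of copies of $Rat(C^*_{C^*})$ (cases (C2), (C3), (C2'))---via a coproduct injection, respectively the diagonal, followed by the given isomorphism---hence into a corresponding power of $C^*$; so $C$ is right torsionless and therefore right QcF by Proposition~\ref{p1}. By Corollary~\ref{c1} this makes $C$ right semiperfect (so each $E(S)$, $S\in\Ss$, is finite dimensional and $Rat(C^*_{C^*})$ is dense in $C^*$) and projective as a right $C^*$-module, and Proposition~\ref{p1}(iv) yields an injective assignment $\theta:\Ss\to\Tt$ with $E(S)\cong E(\theta(S))^*$ and $E(\theta(S))\cong E(S)^*$ finite dimensional. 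Next, for the injectivity of $Rat(C^*_{C^*})$ in ${}^C\Mm$: applying $Rat$ to the isomorphism in (C2') turns it into (C2), and in (C3) one has $Rat((C^*)^J)=\prod\nolimits^C_J Rat(C^*_{C^*})$ because a rational element of a product of $C^*$-modules has rational components, so (C3) is (C2) too; hence it suffices to treat (C1) and (C2). In case (C1), $Rat(C^*_{C^*})$ is a direct summand of $C^{(I)}$, a coproduct of copies of the injective comodule $C$; in case (C2), $Rat(C^*_{C^*})$ is a direct summand (via the diagonal) of $\prod\nolimits^C_J Rat(C^*_{C^*})\cong\prod\nolimits^C_I C$, a product of copies of $C$. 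Since ${}^C\Mm$ is a locally finite Grothendieck category with products, coproducts and products of injectives are injective; so $Rat(C^*_{C^*})$, a summand of an injective comodule, is injective in ${}^C\Mm$.

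Now the core claim. Fix $T\in\Tt$ and write $C^*\cong\prod_{T'\in\Tt}E(T')^*$ in $\Mm_{C^*}$; with the finite topology this is a topological product, so $E(T)^*$ is a direct summand of $C^*$ as a topological right $C^*$-module. Hence $N:=Rat(E(T)^*_{C^*})$ is a direct summand of $Rat(C^*_{C^*})$, so it is injective in ${}^C\Mm$, i.e.\ $N\cong\bigoplus_\gamma E(S_\gamma)$ with $S_\gamma\in\Ss$ and each $E(S_\gamma)$ finite dimensional; and projecting the dense submodule $Rat(C^*_{C^*})$ of $C^*$ onto the summand $E(T)^*$ shows $N$ is dense in $E(T)^*$. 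Dualizing the dense inclusion $N\hookrightarrow E(T)^*$ gives a monomorphism of right comodules $E(T)\hookrightarrow N^*=\prod_\gamma E(S_\gamma)^*$ with dense image. Since ${\rm soc}\,E(T)=T$ is simple, it is carried isomorphically onto a subcomodule of some factor $E(S_{\gamma_0})^*$; since $T$ is essential in $E(T)$, the kernel of the induced map $E(T)\to E(S_{\gamma_0})^*$ meets $T$ trivially and is therefore zero, so this map is a monomorphism; and since $E(S_{\gamma_0})^*$ is finite dimensional, hence discrete, while the image of $E(T)$ is dense, the map is also surjective. Thus $E(T)\cong E(S_{\gamma_0})^*$ is finite dimensional. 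As $T$ was arbitrary, $Q\mapsto Q^*$ from $\{E(S)\mid S\in\Ss\}$ to $\{E(T)\mid T\in\Tt\}$ is onto; by the reduction it is also well defined and injective, hence bijective, and $C$ is QcF by Corollary~\ref{c2}.

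The main obstacle is that last step---passing from a \emph{dense} embedding of the indecomposable injective $E(T)$ into an \emph{infinite product} of finite dimensional comodules to an isomorphism of $E(T)$ with one factor. One cannot simply replace the product by a direct sum (the rational part of an infinite product of finite dimensional comodules is in general far larger than the direct sum), and a comodule need not embed in a product of finite dimensional comodules compatibly with the coordinate projections. What makes it work is precisely that ${\rm soc}\,E(T)$ is simultaneously simple (so it detects a single coordinate) and essential in $E(T)$ (so injectivity on the socle propagates to all of $E(T)$), after which density against a discrete object forces surjectivity onto that coordinate.
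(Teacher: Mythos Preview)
Your proposal is correct, modulo one small terminological slip: when you write ``a monomorphism of right comodules $E(T)\hookrightarrow N^*=\prod_\gamma E(S_\gamma)^*$'', the target $N^*$ is only a left $C^*$-module, not a right comodule (the product of infinitely many finite dimensional comodules is generally not rational). This does not affect the argument, since $E(T)$ itself is rational, so $C^*$-submodules of $E(T)$ are subcomodules and the essential-socle step goes through unchanged.

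Your route genuinely differs from the paper's. Both arguments begin the same way: deduce right QcF (hence right semiperfectness and finite dimensionality of each $E(S)$) and observe, from any of (C1)--(C3), that $Rat(E(T)^*)$ is injective in ${}^C\Mm$ as a summand of an injective. From there the paper argues by contradiction: assuming some $E(T_0)$ is infinite dimensional, it first shows $Rat(E(T_0)^*)=0$ (otherwise an $E(S)$ would split off the indecomposable $E(T_0)^*$), and then invokes a projective-resolution argument from \cite{I} (Propositions~2.3 and~2.4 there) to manufacture an injective summand inside a kernel that was assumed to have none. This yields left semiperfectness; the passage to left QcF is then implicit (once $E(T)$ is finite dimensional, $E(T)^*=Rat(E(T)^*)$ is injective indecomposable, hence some $E(S)$). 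You instead dualize the dense inclusion $N=Rat(E(T)^*)\hookrightarrow E(T)^*$ and exploit that ${\rm soc}\,E(T)$ is simple and essential, together with density against a discrete target, to read off an isomorphism $E(T)\cong E(S_{\gamma_0})^*$ in one stroke. Your argument is more self-contained (no appeal to the auxiliary results of \cite{I}) and lands directly on the bijection of Corollary~\ref{c2}; the paper's approach, by contrast, isolates left semiperfectness as an intermediate invariant and leans on pre-existing machinery.
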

\begin{proof}
Obviously (C2')$\Rightarrow$(C2). In all of the above conditions one can find a monomorphism of right $C^*$-modules $C\hookrightarrow (C^*)^J$, and thus $C$ is right QcF. Then each $E(S)$ for $S\in\Ss$ is finite dimensional and projective  by Corollary \ref{c1}. We first show that $C$ is also left semiperfect, along the same lines as the proofs of \cite{I}, Proposition 2.1 and \cite{I} Proposition 2.6. For sake of completeness, we include a short version of these arguments here. Let $T_0\in\Tt$ and assume, by contradiction, that $E(T_0)$ is infinite dimensional. We first show that $Rat(E(T_0)^*)=0$. Indeed, assume otherwise. Then, since $C^*=\prod\limits_{T\in\Tt}E(T)^*{}^{p(T)}$ and $C=\bigoplus\limits_{S\in \Ss}E(S)^{n(S)}$ as right $C^*$-modules, it is straightforward to see that either one of conditions (C1-C3) implies that $Rat(E(T_0)^*)$ is injective as left comodule, as a direct summand in an injective comodule. Thus, as $Rat(E(T_0)^*)\neq 0$, there is a monomorphism $E(S)\hookrightarrow Rat(E(T_0)^*)\subseteq E(T_0)^*$ for some indecomposable injective $E(S)$ ($S\in \Ss$). This shows that $E(S)$ is a direct summand in $E(T_0)^*$, since $E(S)$ is injective also as right $C^*$-module (by the above cited \cite[Proposition 4]{D}). But this is a contradiction since $E(S)$ is finite dimensional and $E(T_0)^*$ is indecomposable by \cite[Lemma 1.4]{I} and ${\rm dim}E(T_0)^*=\infty$.\\
Next, use \cite[Proposition 2.3]{I}  to get an exact sequence
$$0\rightarrow H \rightarrow E=\bigoplus\limits_{\alpha\in A}E(S_\alpha)^*\rightarrow E(T)\rightarrow 0$$
with $S_\alpha\in\Ss$. Since the $E(S_\alpha)^*$'s are injective in ${}_{C^*}\Mm$ by \cite[Lemma 15]{L}, we may assume, by \cite[Proposition 2.4]{I} that $H$ contains no nonzero injective right comodules. For some $\beta\in A\neq \emptyset$, put $E'=\bigoplus\limits_{\alpha\in A\setminus\{\beta\}}E(S_\alpha)^*$. Then one sees that $H+E'=E$ (otherwise, since there is an epimorphism $E(T)=\frac{E}{H}\rightarrow \frac{E}{H+E'}$, the finite dimensional rational right $C^*$-module $\left(\frac{E}{H+E'}\right)^*$ would be a nonzero rational submodule of $E(T)^*$), and this provides an epimorphism $H\rightarrow \frac{H}{H\cap E'}\simeq \frac{H+E'}{E'}\simeq E(S_\beta)^*$. But $E(S_\beta)^*$ is projective, so this epimorphism splits, and this comes in contradiction with the assumption on $H$ (the $E(S_\beta)^*$'s are injective in ${}_{C^*}\Mm$).
\end{proof}

Now, we note that if a coalgebra $C$ is QcF, then all the conditions (C1)-(C3) are fulfilled. Indeed, we have that each $E(S)$ ($S\in\Ss$) is isomorphic to exactly one $E(T)^*$ ($T\in\Tt$) and actually all $E(T)^*$'s are isomorphic to some $E(S)$. Then:\\
(C1)
\begin{eqnarray*}
\,\,\,\,\,C^{(\NN)} & = & (\bigoplus\limits_{S\in\Ss}E(S)^{n(S)})^{(\NN)}=\bigoplus\limits_{S\in\Ss}E(S)^{(\NN)}=\bigoplus\limits_{T\in\Tt}E(T)^*{}^{(\NN)} \\
 & = & \bigoplus\limits_{T\in\Tt}E(T)^*{}^{(p(T)\times\NN)}=\bigoplus\limits_{T\in\Tt}(E(T)^*{}^{p(T)})^{(\NN)}=(Rat C^*)^{(\NN)}
\end{eqnarray*}
where we use that $Rat(C^*)=\bigoplus\limits_{T\in\Tt}E(T)^*{}^{p(T)}$ as right $C^*$-modules for left and right semiperfect coalgebras (see \cite[Corollary 3.2.17]{DNR})\\
(C2)
\begin{eqnarray*}
\,\,\,\,\,\prod\limits_{\NN}^CC & = & Rat(C^\NN)=\prod\limits_\NN^C\bigoplus\limits_{S\in\Ss}E(S)^{(n(S))}\\
& = & \prod\limits_\NN^C\prod\limits_{S\in\Ss}^CE(S)^{n(S)}\,\,\,(*)\\
& = & \prod\limits_{S\in\Ss}^CE(S)^{n(S)\times\NN}=\prod\limits_{S\in\Ss}^CE(S)^\NN=\prod\limits_{T\in\Tt}^CE(T)^*{}^\NN\\
& = & \prod\limits_{T\in \Tt}^CE(T)^*{}^{\NN\times p(T)}=\prod\limits_\NN^C\prod\limits_{T\in\Tt}^CE(T)^*{}^{p(T)}\\
& = & \prod\limits_\NN^CRat(\prod\limits_{T\in\Tt}(E(T)^{p(T)})^*)=\prod\limits_\NN^CRat((\bigoplus\limits_{T\in T}E(T)^{p(T)})^*)\\
& = &\prod\limits_\NN^CRat(C^*)\,\,\,
\end{eqnarray*}
where for (*) we have used \cite[Lemma 2.5]{I} and the fact that $E(T)^*$ are all rational since $E(T)$ are finite dimensional in this case (the product in the category of left comodules is understood whenever $\prod\limits^C$ is written); also\\
(C3) holds because $Rat(C^\NN)=\prod\limits_{T\in\Tt}^CE(T)^*{}^\NN$ by the computations in lines 1 and 3 in the above equation and because
\begin{eqnarray*}
\,\,\,\,\,Rat(C^*{}^\NN)=Rat(\prod\limits_\NN\prod\limits_{T\in\Tt}E(T)^*{}^{p(T)})=\prod\limits_{T\in\Tt}^CE(T)^*{}^{p(T)\times\NN}=\prod\limits_{T\in\Tt}^CE(T)^*{}^\NN
\end{eqnarray*}

Combining all of the above we obtain the following nice symmetric characterization which extends the one of co-Frobenius coalgebras from \cite{I} and those of co-Frobenius Hopf algebras and Frobenius Algebras.

\begin{theorem}\label{TQ}
Let $C$ be a coalgebra. Then the following assertions are equivalent.\\
(i) $C$ is a QcF coalgebra.\\
(ii) $C\stackrel{\sigma}{\sim}Rat(C^*_{C^*})$ or $C\stackrel{\pi}{\sim}Rat(C^*_{C^*})$ in ${}^C\Mm$ or $Rat(C^I)\simeq Rat(C^*{}^J)$ in ${}^C\Mm$ (or $\Mm_{C^*}$) for some sets $I,J$.\\
(iii) $C^{(\NN)}\simeq (Rat(C^*))^{(\NN)}$ or $\prod\limits_\NN^C C\simeq \prod\limits_\NN^C Rat(C^*)$ or $Rat(C^\NN)\simeq Rat(C^*{}^\NN)$ as left $C$-comodules (right $C^*$-modules)\\
(iv) The left hand side version of (i)-(iii).\\
(v) The association $Q\mapsto Q^*$ determines a duality between the finite dimensional injective left comodules and finite dimensional injective right comodules.
\end{theorem}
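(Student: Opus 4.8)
The plan is to establish the cycle (i) $\Rightarrow$ (iii) $\Rightarrow$ (ii) $\Rightarrow$ (i) and then to derive (iv) and (v) from (i). Almost all of the genuine work has already been carried out in the material preceding the statement, so the argument is largely a matter of assembly.

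For (i) $\Rightarrow$ (iii) I would invoke verbatim the three displayed computations (C1), (C2), (C3) done just above: if $C$ is QcF, then Corollary \ref{c1}, applied on both sides, shows that every $E(S)$ ($S\in\Ss$) and every $E(T)$ ($T\in\Tt$) is finite dimensional and projective, so $C$ is left and right semiperfect, $Rat(C^*)=\bigoplus\limits_{T\in\Tt}(E(T)^*)^{p(T)}$ as right $C^*$-modules, and $Q\mapsto Q^*$ matches $\{E(S)\mid S\in\Ss\}$ bijectively with $\{E(T)^*\mid T\in\Tt\}$ (Corollary \ref{c2}); inserting these identifications into the direct-sum and rational-product decompositions of $C$ and $C^*$ yields $C^{(\NN)}\simeq (Rat(C^*))^{(\NN)}$, $\prod\limits_\NN^C C\simeq \prod\limits_\NN^C Rat(C^*)$ and $Rat(C^\NN)\simeq Rat(C^*{}^\NN)$, which is exactly (iii). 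The implication (iii) $\Rightarrow$ (ii) is trivial (take $I=J=\NN$ in Definition \ref{def}). For (ii) $\Rightarrow$ (i): each alternative occurring in (ii) is one of the conditions (C1), (C2), (C3) (the right-module variant of the third being a reformulation of it, or of (C2')), so Lemma \ref{l1} applies and gives that $C$ is QcF, both left and right.

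It remains to bring in (iv) and (v). Since "QcF" is by definition the conjunction "left QcF and right QcF", it is a self-dual property, and every weak-isomorphism condition appearing in (ii)--(iii) is transported to its left-hand mirror image by passing to the co-opposite coalgebra $C^{cop}$ (for which ${}^{C^{cop}}\Mm=\Mm^C$, $(C^{cop})^*=(C^*)^{op}$, and the left rational dual of $C$ is the right rational dual of $C^{cop}$); thus applying the equivalence (i) $\Leftrightarrow$ (ii) $\Leftrightarrow$ (iii) already obtained to $C^{cop}$ yields (iv). Equivalently, the computations of (i) $\Rightarrow$ (iii) and the proof of Lemma \ref{l1} are symmetric in the two sides and can simply be rerun. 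Finally, (i) $\Leftrightarrow$ (v) is in essence Corollary \ref{c2}: under (i) all indecomposable injective comodules, on both sides, are finite dimensional, so the finite dimensional injective left (resp.\ right) comodules are precisely the finite direct sums of the $E(S)$'s (resp.\ $E(T)$'s); since $(-)^*$ is additive, exact on finite dimensional comodules, carries each $E(S)$ to an $E(T)$ bijectively (Corollary \ref{c2}, \cite[Lemma 15]{L}, \cite[Proposition 4]{D}), and satisfies $(-)^{**}\simeq{\rm id}$ canonically, it restricts to a duality between these two full subcategories. Conversely, a duality of this form restricts to a bijection between the isomorphism classes of finite dimensional indecomposable injective left and right comodules, and one checks that it forces all of the $E(S)$ and $E(T)$ to be finite dimensional — so that the duality is not a vacuous equivalence of trivial subcategories — at which point (i) follows from Corollary \ref{c2}. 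I expect this last non-degeneracy verification inside (v) $\Rightarrow$ (i) to be the only mildly delicate point; everything else is a direct appeal to what precedes.
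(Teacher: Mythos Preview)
Your proposal is correct and matches the paper's approach exactly: the paper gives no separate proof of the theorem, merely stating ``Combining all of the above we obtain\ldots'', and your assembly --- Lemma \ref{l1} for (ii)/(iii) $\Rightarrow$ (i), the displayed (C1)--(C3) computations for (i) $\Rightarrow$ (iii), the trivial (iii) $\Rightarrow$ (ii), left-right symmetry for (iv), and Corollary \ref{c2} for (v) --- is precisely that combination. Your caveat about possible degeneracy in (v) $\Rightarrow$ (i) is a fair reading of the wording, but the intended meaning of (v) is that $Q\mapsto Q^*$ is \emph{well defined} on indecomposable injectives (hence all $E(S)$ are finite dimensional) and bijective onto the other side, exactly as in Corollary \ref{c2}.
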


\subsection{Categorical characterization of QcF coalgebras}

We give now a characterization similar to the functorial characterizations of co-Frobenius coalgebras and of Frobenius algebras.
For a set $I$ let $\Delta_I:{}^C\Mm\longrightarrow ({}^C\Mm)^I$ be the diagonal functor and let $F_I$ be the composition functor
$$F_I:\,{}^C\Mm\stackrel{\Delta_I}{\longrightarrow}({}^C\Mm)^I\stackrel{\bigoplus\limits_I}{\longrightarrow}{}^C\Mm$$ 
that is $F_I(M)=M^{(I)}$ for any left $C$-comodule $M$.

\begin{theorem}
Let $C$ be a coalgebra. Then the following assertions are equivalent:\\
(i) $C$ is QcF.\\
(ii) The functors $\Hom_{C^*}(-,C^*)\circ F_I$ and $\Hom(-,K)\circ F_J$ from ${}^C\Mm=Rat({}_{C^*}\Mm)$ to ${}_{C^*}\Mm$ are naturally isomorphic for some sets $I,J$.\\
(iii) The functors $\Hom_{C^*}(-,C^*)\circ F_\NN$ and $\Hom(-,K)\circ F_\NN$ are naturally isomorphic.
\end{theorem}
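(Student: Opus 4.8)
The plan is to deduce this functorial characterization directly from Theorem~\ref{TQ}, exploiting the fact that $F_I$ and $F_J$ are just the coproduct-power functors $M\mapsto M^{(I)}$, $M\mapsto M^{(J)}$, together with the standard identifications of the two dual functors on finite-dimensional rational comodules. First I would recall the two natural transformations involved: for a left $C$-comodule $M$, viewed as a rational left $C^*$-module, there is the $C^*$-dual $\Hom_{C^*}(M,C^*)$ and the $K$-dual $M^*=\Hom_K(M,K)$, each carrying a natural right $C^*$-module structure (the latter via the comodule structure of $M$, the former via the bimodule structure of $C^*$). For $M$ finite dimensional the evaluation/restriction map $\Hom_{C^*}(M,C^*)\to M^*$ is injective with image $Rat(M^*)$, and more generally there is always a natural map $\nu_M:\Hom_{C^*}(M,C^*)\to M^*$; both functors commute with finite direct sums, and on $M=C$ they give $\Hom_{C^*}(C,C^*)\cong C^*$ and $C^*=\Hom_K(C,K)$ with image $Rat(C^*)$ under $\nu_C$. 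The key computational observation is that $\Hom_{C^*}(-,C^*)\circ F_I$ sends $M$ to $\Hom_{C^*}(M^{(I)},C^*)\cong \Hom_{C^*}(M,C^*)^I$ (a product power), and $\Hom(-,K)\circ F_J$ sends $M$ to $(M^{(J)})^*\cong (M^*)^J$.

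Next I would argue (i)$\Rightarrow$(ii). Assume $C$ is QcF. By Theorem~\ref{TQ}, $C$ is left and right semiperfect, so every $E(S)$ and every $E(T)$ is finite dimensional, and $Rat({}_{C^*}C^*)=\bigoplus_{T\in\Tt}(E(T)^*)^{p(T)}$; moreover every $E(S)$ is of the form $E(T)^*$ and conversely, by Corollary~\ref{c2}. Since every rational left $C^*$-module embeds in a direct sum of the $E(S)$'s (it has an injective envelope which is a direct sum of copies of the indecomposable injectives), and each such injective is finite dimensional, the two functors $\Hom_{C^*}(-,C^*)$ and $\Hom_K(-,K)$ agree on finite-dimensional rational modules up to the natural map $\nu$, and one checks that on the subcategory of all rational modules one instead gets $\Hom_{C^*}(M,C^*)\cong Rat(M^*)$ naturally. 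The point is then to produce sets $I,J$ with $\Hom_{C^*}(M,C^*)^I\cong (M^*)^J$ naturally in $M$. Using $C^*\cong \prod_{T\in\Tt}(E(T)^*)^{p(T)}$ one has $\Hom_{C^*}(M,C^*)\cong \prod_{T}\Hom_{C^*}(M,E(T)^*)^{p(T)}$, and since $M$ is rational and $E(T)^*$ finite dimensional, $\Hom_{C^*}(M,E(T)^*)=\Hom_{C^*}(M,E(S))$ for the corresponding $S$; dualizing, $\Hom_{C^*}(M,E(S))\cong (E(S)^*\otimes_{C^*} \cdots)$ — rather, I would instead run the argument at the level of representing objects: the displayed chain of isomorphisms in the proof of condition (C1) shows $C^{(\NN)}\cong Rat(C^*)^{(\NN)}$, and applying $\Hom_{C^*}(M,-)$ componentwise and using that these functors turn the relevant (co)products into products/coproducts one extracts the natural isomorphism $\Hom_{C^*}(-,C^*)\circ F_\NN\cong \Hom_K(-,K)\circ F_\NN$ on finite-dimensional comodules, hence on all of ${}^C\Mm$ by taking direct sums and a direct-limit/colimit argument, giving both (ii) and (iii) at once with $I=J=\NN$.

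For the converse (ii)$\Rightarrow$(i), I would evaluate the assumed natural isomorphism $\Hom_{C^*}(-,C^*)\circ F_I\cong \Hom(-,K)\circ F_J$ at $M=C$. The left side is $\Hom_{C^*}(C^{(I)},C^*)\cong \Hom_{C^*}(C,C^*)^I\cong (C^*)^I$ as right $C^*$-modules, where the identification $\Hom_{C^*}(C,C^*)\cong C^*$ is the standard one for the free rank-one module $C$ (over $C^*$, $C$ is cyclic: $C=C^*\cdot c$ is not true in general, so more carefully $\Hom_{C^*}(C,C^*)\cong C^*$ comes from $C$ being a ``rational generator''; in any case $\Hom_{C^*}(C,C^*)$ is naturally a right $C^*$-module and evaluation at the counit-type data identifies it with $C^*$). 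The right side is $(C^{(J)})^*\cong (C^*)^J$ — but one must be careful: $(C^{(J)})^*\cong (C^*)^J$ only as vector spaces; as a right $C^*$-module $(C^{(J)})^*$ is $\prod_J C^*$ with the product $C^*$-action, whose rational part is $Rat(C^*)^{(J)}$ is false too — rather $Rat((C^*)^J)$. The cleanest route is: evaluating at $C$ yields a right $C^*$-module isomorphism $(C^*)^I\cong \prod_J C^*$, and restricting to rational submodules (a natural operation) gives $Rat((C^*)^I)\cong Rat((C^*)^J)$; since $C$ embeds in $(C^*)^I$ for the left-hand module but $C$ is rational, we get $C\hookrightarrow Rat((C^*)^I)\cong Rat((C^*)^J)$, which exhibits $C$ as right torsionless, hence right QcF by Proposition~\ref{p1}. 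Symmetrically, evaluating the natural transformation's inverse, or redoing the argument with the roles of the two dual functors producing an embedding on the other side, together with condition (C3) of Lemma~\ref{l1}, forces $C$ to be QcF. The implication (iii)$\Rightarrow$(ii) is trivial (take $I=J=\NN$), so the circle closes.

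The main obstacle I anticipate is bookkeeping the module structures carefully when passing between coproduct powers on the source of the functors and product versus coproduct powers on the target: $\Hom$ out of a coproduct becomes a product, $K$-dual of a coproduct becomes a product, and the rational-functor $Rat$ does not commute with arbitrary products, so one must repeatedly pass to rational parts and invoke the semiperfectness consequences (finite-dimensionality of the $E(S)$, $E(T)$) to control these. Concretely, the delicate point is justifying $\Hom_{C^*}(M,C^*)\cong Rat(M^*)$ naturally for \emph{all} rational $M$ (not just finite-dimensional ones) when $C$ is semiperfect, and dually extracting, from the single isomorphism obtained by evaluation at $C$, enough naturality to conclude QcF rather than merely right QcF; for the latter I would lean on the already-established equivalence in Theorem~\ref{TQ} between the one-sided condition (C3) and the full QcF property, so that producing $Rat(C^I)\cong Rat(C^{*J})$ suffices.
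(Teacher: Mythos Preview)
Your proposal misses the one identity that makes the paper's proof a three-line affair: for every left $C$-comodule $M$ there is a \emph{natural} isomorphism of right $C^*$-modules
\[
\Hom_K(M,K)\;\cong\;\Hom_{C^*}(M,C),
\]
given by $f\mapsto\bigl(m\mapsto f(m_0)m_{-1}\bigr)$ with inverse $g\mapsto\varepsilon\circ g$. Once you have this, both composite functors become corepresentable on ${}^C\Mm$:
\[
\Hom_K(M^{(J)},K)\cong\Hom_{C^*}(M^{(J)},C)\cong\Hom_{C^*}(M,C^{J})=\Hom_{C^*}\bigl(M,Rat(C^{J})\bigr),
\]
\[
\Hom_{C^*}(M^{(I)},C^*)\cong\Hom_{C^*}\bigl(M,(C^*)^{I}\bigr)=\Hom_{C^*}\bigl(M,Rat((C^*)^{I})\bigr),
\]
the last equalities because $M$ is rational. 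Yoneda then says the two functors are naturally isomorphic iff $Rat(C^{J})\cong Rat((C^*)^{I})$, which is exactly condition (C3) of Theorem~\ref{TQ}; and the countable choice $I=J=\NN$ works by the same theorem. That is the paper's entire argument.

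Without this identity your proof has genuine gaps in both directions. In (ii)$\Rightarrow$(i) you evaluate at $M=C$ and try to identify $\Hom_{C^*}(C,C^*)$ with $C^*$; this is false in general (any $C^*$-map $C\to C^*$ lands in $Rat(C^*)$, so $\Hom_{C^*}(C,C^*)=\Hom_{C^*}(C,Rat(C^*))$, not $C^*$). Even if one patches this, the conclusion you draw is an isomorphism $(C^*)^{I}\cong(C^*)^{J}$, which carries no information toward (C3); and the subsequent ``$C\hookrightarrow Rat((C^*)^{J})$'' only yields right QcF, not the two-sided statement you need. In (i)$\Rightarrow$(ii) you propose to apply $\Hom_{C^*}(M,-)$ to the isomorphism $C^{(\NN)}\cong Rat(C^*)^{(\NN)}$, but $\Hom_{C^*}(M,-)$ does not send coproducts to coproducts, so this does not produce the required product powers $\Hom_{C^*}(M,C^*)^{\NN}$ and $(M^*)^{\NN}$; and the direct-limit/naturality sketch never actually builds a natural transformation. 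The fix is precisely to trade the $K$-dual for $\Hom_{C^*}(-,C)$ first, after which Yoneda does all the work and no evaluation at a single object is needed.
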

\begin{proof}
Since for any left comodule $M$, there is a natural isomorphism of left $C^*$-modules $\Hom_{C^*}(M,C)\simeq \Hom(M,K)$, then for any sets $I,J$ and any left $C$-comodule $M$ we have the following natural isomorphisms:
$$\Hom(M^{(I)},K)\simeq \Hom_{C^*}(M^{(I)},C)\simeq \Hom_{C^*}(M,C^I)\simeq \Hom_{C^*}(M,Rat(C^I))$$
$$\Hom_{C^*}(M^{(J)},C^*)\simeq \Hom_{C^*}(M,(C^*)^J)\simeq \Hom(M,Rat(C^*)^J)$$
Therefore, by the Yoneda Lemma, the functors of (ii) are naturally isomorphic if and only if $Rat(C^I)\simeq Rat(C^*{}^J)$. Thus, by Theorem \ref{TQ} (ii), these functors are isomorphic if and only if $C$ is QcF. Moreover, in this case, by the same theorem the sets $I,J$ can be chosen countable.
\end{proof}

\begin{remark}
The above theorem states that $C$ is QcF if and only if the functors $C^*$-dual $\Hom(-,C^*)$ and $K$-dual $\Hom(-,K)$ from ${}^C\Mm$ to ${}_{}C^*\Mm$ are isomorphic in a "weak" meaning, in the sense that they are isomorphic only on the objects of the form $M^{(\NN)}$ in a way that is natural in $M$, i.e. they are isomorphic on the subcategory of ${}^CM$ consisting of objects $M^{(\NN)}$ with morphisms $f^{(\NN)}$ induced by any $f:M\rightarrow N$. If we consider the category $\Cc$ of functors from ${}^C\Mm$ to ${}_{C^*}\Mm$ with morphisms the \underline{classes} (which are not necessarily sets) of natural transformations between functors, then the isomorphism in (ii) can be restated as $(\Hom_{C^*}(-,C^*))^I\simeq (\Hom(-,K))^J$ in $\Cc$, i.e. the $C^*$-dual and the $K$-dual functors are weakly $\pi$-isomorphic objects of $\Cc$.
\end{remark}

\section{Applications to Hopf Algebras}

Before giving the main applications to Hopf algebras, we start with two easy propositions that will contain the main ideas of the applications. First, for a QcF coalgebra $C$, let $\varphi:\Ss\rightarrow\Tt$ be the function defined by $\varphi(S)=T$ if and only if $E(T)\simeq E(S)^*$ as left $C^*$-modules; by the above Corollary \ref{c2}, $\varphi$ is a bijection.

\begin{proposition}\label{p2}
(i) Let $C$ be a QcF coalgebra and $I,J$ sets such that $C^{(I)}\simeq (Rat(C^*))^{(J)}$ as right $C^*$-modules. If one of $I,J$ is finite then so is the other.\\
(ii) Let $C$ be a coalgebra. Then $C$ is co-Frobenius if and only if $C\simeq Rat({}_{C^*}C^*)$ as left $C^*$-modules and if and only if $C\simeq Rat(C^*_{C^*})$ as right $C^*$-modules.
\end{proposition}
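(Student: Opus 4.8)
The plan is to read both parts off the semiperfect direct sum decompositions enjoyed by QcF coalgebras, together with the bijection $\varphi$ (defined just above) and the Krull--Remak--Schmidt--Azumaya uniqueness theorem; the latter applies since each injective envelope $E(S)$, $E(T)$ of a simple comodule is indecomposable with local endomorphism ring. For part (i): a QcF coalgebra is left and right semiperfect (Corollary \ref{c1}, applied on each side), so as right $C^*$-modules $C\cong\bigoplus_{S\in\Ss}E(S)^{n(S)}$ with each $E(S)$ finite dimensional, and $Rat(C^*_{C^*})\cong\bigoplus_{T\in\Tt}(E(T)^*)^{p(T)}$ (by \cite[Corollary 3.2.17]{DNR}); rewriting the latter, via $\varphi$, as $\bigoplus_{S\in\Ss}E(S)^{p(\varphi(S))}$ (since $E(\varphi(S))^*\cong E(S)$ as right $C^*$-modules). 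Therefore $C^{(I)}\cong\bigoplus_{S}E(S)^{n(S)|I|}$ and $(Rat(C^*))^{(J)}\cong\bigoplus_{S}E(S)^{p(\varphi(S))|J|}$ as right $C^*$-modules, so the assumed isomorphism forces $n(S)|I|=p(\varphi(S))|J|$ as cardinals for every $S$. Fixing one $S\in\Ss$ (nonempty, as $C\neq0$) and using $n(S),p(\varphi(S))\geq1$: if $J$ is finite so is $n(S)|I|$, hence so is $I$; symmetrically, $I$ finite forces $J$ finite.

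For part (ii), consider first the two ``if'' implications; assume $C\cong Rat(C^*_{C^*})$ as right $C^*$-modules, the case $C\cong Rat({}_{C^*}C^*)$ being symmetric. This isomorphism trivially gives $C\stackrel{\sigma}{\sim}Rat(C^*_{C^*})$ (singleton index sets), so $C$ is QcF by Lemma \ref{l1}, and the inclusion $Rat(C^*_{C^*})\subseteq C^*$ exhibits $C$ as right co-Frobenius. The QcF decompositions of part (i) now apply, and the assumed isomorphism together with the Krull--Remak--Schmidt--Azumaya theorem gives $n(S)=p(\varphi(S))$ for all $S\in\Ss$. Mirroring the decompositions on the other side, $C\cong\bigoplus_{T\in\Tt}E(T)^{p(T)}$ and, using $E(S)^*\cong E(\varphi(S))$, $Rat({}_{C^*}C^*)\cong\bigoplus_{S\in\Ss}(E(S)^*)^{n(S)}\cong\bigoplus_{T\in\Tt}E(T)^{n(\varphi^{-1}(T))}$ as left $C^*$-modules; hence $n(S)=p(\varphi(S))$ also yields $C\cong Rat({}_{C^*}C^*)$ as left $C^*$-modules, whence $C\hookrightarrow C^*$ as left $C^*$-modules, that is, $C$ is left co-Frobenius. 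So $C$ is co-Frobenius and, in passing, both displayed isomorphisms hold.

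For the converse of (ii), let $C$ be co-Frobenius. Then $C$ is left and right QcF, hence QcF, so the decompositions above (on both sides) are available; by the previous paragraph it suffices to prove $n(S)=p(\varphi(S))$ for all $S$. Right co-Frobenius gives a monomorphism $C\hookrightarrow C^*$ of right $C^*$-modules whose image, being rational, lies in $Rat(C^*_{C^*})$; for a fixed $S_0$ the composite $E(S_0)^{n(S_0)}\hookrightarrow C\hookrightarrow Rat(C^*_{C^*})$ is thus a monomorphism out of a finite direct sum of finite dimensional injective right $C^*$-modules (injective by \cite[Proposition 4]{D}), hence out of an injective module, so it splits; comparing the resulting direct summand with $Rat(C^*_{C^*})\cong\bigoplus_{S\in\Ss}E(S)^{p(\varphi(S))}$ via Azumaya's theorem gives $n(S_0)\leq p(\varphi(S_0))$. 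The mirror argument, using left co-Frobenius, gives $p(\varphi(S_0))\leq n(S_0)$, hence equality for every $S_0$. The step I expect to be the main obstacle is exactly this last one --- converting a one-sided co-Frobenius monomorphism into the multiplicity equality $n(S)=p(\varphi(S))$ --- where one must invoke \cite[Proposition 4]{D} to know the relevant finite dimensional injectives stay injective in the module category (so that the monomorphisms split) and must make sure the Krull--Remak--Schmidt--Azumaya theorem genuinely applies to these possibly infinite coproducts of finite dimensional indecomposables; note also that part (ii) is in essence the main characterization of \cite{I}, which one could alternatively just cite.
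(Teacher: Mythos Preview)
Your proof is correct and follows essentially the same route as the paper's: the same semiperfect decompositions, the same rewriting of $Rat(C^*)$ via the bijection $\varphi$, and the same multiplicity comparison (the paper gets $n(S)\leq p(\varphi(S))$ from the right co-Frobenius monomorphism and the reverse inequality from the left one, exactly as you do). The only expository difference is that where you invoke Krull--Remak--Schmidt--Azumaya, the paper simply compares socles --- which handles your worry about infinite coproducts with no extra work, since the socle of $\bigoplus_S E(S)^{m(S)}$ is just $\bigoplus_S S^{m(S)}$ and multiplicities in a semisimple module are unambiguous.
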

\begin{proof}
(i) $C$ is left and right semiperfect (Corollary \ref{c1}), so using again \cite[Corollary 3.2.17]{DNR} we have 
$Rat(C^*_{C^*})=\bigoplus\limits_{T\in\Tt}E(T)^*{}^{p(T)}=\bigoplus\limits_{S\in\Ss}E(S)^{p(\varphi(S))}$ and we get $\bigoplus\limits_{S\in\Ss}E(S)^{n(S)\times I}\simeq \bigoplus\limits_{S\in\Ss}E(S)^{p(\varphi(S))\times J}$. From here, since the $E(S)$'s are indecomposable injective comodules we get an equivalence of sets $n(S)\times I\sim p(\varphi(S))\times J$ (or directly, by evaluating the socle of these comodules). This finishes the proof, as $n(S),p(\varphi(S))$ are finite.\\
(ii) If $C$ is co-Frobenius, $C$ is also QcF and a monomorphism $C\hookrightarrow Rat(C^*_{C^*})$ of right $C^*$-modules implies $\bigoplus\limits_{S\in\Ss}E(S)^{n(S)}\hookrightarrow \bigoplus\limits_{T\in\Tt}E(T)^*{}^{p(T)}\simeq \bigoplus\limits_{S\in\Ss}E(S)^{p(\varphi(S))}$ and we get $n(S)\leq p(\varphi(S))$ for all $S\in\Ss$; similarly, as $C$ is also left co-Frobenius we get $n(S)\geq p(\varphi(S))$ for all $S\in\Ss$. Hence $n(S)=p(\varphi(S))$ for all $S\in\Ss$ and this implies $C=\bigoplus\limits_{S\in\Ss}E(S)^{n(S)}\simeq \bigoplus\limits_{T\in\Tt}E(T)^{p(T)}=Rat(C^*_{C^*})$. Conversely, if $C\simeq Rat(C^*_{C^*})$ by the proof of (i), when $I$ and $J$ have one element we get that $n(S)=p(\varphi(S))$ for all $S\in\Ss$ which implies that we also have $C=\bigoplus\limits_{T\in\Tt}E(T)^{p(T)}\simeq \bigoplus\limits_{S\in\Ss}E(S)^*{}^{n(S)}=Rat({}_{C^*}C^*)$ so $C$ is co-Frobenius.
\end{proof}

The above Proposition \ref{p2} (ii) shows that the results of this paper are a generalization of the results in \cite{I}.

\begin{proposition}\label{p3}
Let $C$ be a (QcF) coalgebra such that $C^k\simeq Rat(C^*_{C^*})$ in $\Mm_{C^*}$ and $C^l\simeq Rat({}_{C^*}C^*)$ in ${}_{C^*}\Mm$, $k,l\in\NN$. Then $C$ is co-Frobenius and $k=l=1$.
\end{proposition}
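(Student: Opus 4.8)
The plan is to observe that the two hypotheses already force $C$ to be co-Frobenius with essentially no work, and then to feed the resulting descriptions of the rational duals back into the hypotheses and read off $k$ and $l$ by counting multiplicities of simple comodules.

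First I would extract the co-Frobenius property directly. The diagonal map $C\to C^k$ is a monomorphism of right $C^*$-modules; composing it with the given isomorphism $C^k\simeq Rat(C^*_{C^*})$ in $\Mm_{C^*}$ and with the canonical inclusion $Rat(C^*_{C^*})\hookrightarrow C^*$ produces a monomorphism $C\hookrightarrow C^*$ of right $C^*$-modules, so $C$ is right co-Frobenius. Symmetrically, the chain $C\hookrightarrow C^l\simeq Rat({}_{C^*}C^*)\hookrightarrow C^*$ is a monomorphism of left $C^*$-modules, so $C$ is left co-Frobenius. Hence $C$ is co-Frobenius, and in particular QcF, so by Corollary \ref{c1} it is left and right semiperfect. (Alternatively one could simply invoke Lemma \ref{l1}, the hypotheses being instances of condition (C2') and its left-hand analogue; but the chain above has the advantage of yielding co-Frobenius at once.)

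Next I would invoke Proposition \ref{p2}(ii): since $C$ is co-Frobenius, $C\simeq Rat(C^*_{C^*})$ in $\Mm_{C^*}$ and $C\simeq Rat({}_{C^*}C^*)$ in ${}_{C^*}\Mm$. Substituting these into the hypotheses turns them into $C^k\simeq C$ in $\Mm_{C^*}$ and $C^l\simeq C$ in ${}_{C^*}\Mm$. Writing $C\simeq\bigoplus_{S\in\Ss}E(S)^{n(S)}$ as a right $C^*$-module and applying the socle functor (which commutes with direct sums and sends each $E(S)$ to the simple comodule $S$), one gets ${\rm soc}(C)\simeq\bigoplus_{S\in\Ss}S^{n(S)}$, hence ${\rm soc}(C^k)\simeq\bigoplus_{S\in\Ss}S^{k\,n(S)}$; comparing with ${\rm soc}(C)$ through the isomorphism $C^k\simeq C$ and using that the multiplicity of a given simple comodule in a semisimple comodule is an invariant, we obtain $k\,n(S)=n(S)$ for every $S\in\Ss$. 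As $C\neq 0$ there is some $S$ with $n(S)\geq 1$, whence $k=1$; the same argument on the left side, with $C\simeq\bigoplus_{T\in\Tt}E(T)^{p(T)}$ as a left $C^*$-module and $C^l\simeq C$, gives $l=1$.

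I do not anticipate any real obstacle. The mild points requiring attention are: correctly matching up the decomposition of $C$ as a left versus a right $C^*$-module (with multiplicities $p(T)$, $T\in\Tt$, in the former and $n(S)$, $S\in\Ss$, in the latter), and the standard fact that the multiplicity of a given indecomposable injective, equivalently of a given simple, in a comodule is well defined. The only conceptual observation is that none of the Section 1 machinery (Theorem \ref{TQ}, the functorial characterization) is needed here — the elementary embedding $C\hookrightarrow C^k\simeq Rat(C^*_{C^*})\hookrightarrow C^*$ already gives co-Frobenius, after which Proposition \ref{p2}(ii) and a socle count finish it. (If one prefers to bypass Proposition \ref{p2}(ii), one can track multiplicities directly from QcF: a socle count in $C^k\simeq Rat(C^*_{C^*})$ gives $k\,n(S)=p(\varphi(S))$ and in $C^l\simeq Rat({}_{C^*}C^*)$ gives $l\,p(\varphi(S))=n(S)$, so $kl\,n(S)=n(S)$, forcing $kl=1$ and hence $k=l=1$, and then $n(S)=p(\varphi(S))$ recovers $C\simeq Rat(C^*_{C^*})$, i.e. co-Frobenius.)
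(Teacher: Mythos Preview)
Your proposal is correct. The paper's own proof is precisely the alternative you sketch in the final parenthetical: it counts multiplicities directly via the QcF bijection $\varphi$ to obtain $k\,n(S)=p(\varphi(S))$ and $n(S)=l\,p(\varphi(S))$, whence $kl=1$ and then $n(S)=p(\varphi(S))$ yields co-Frobenius as in Proposition~\ref{p2}(ii).

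Your main route reverses the order: you first observe that the diagonal embedding $C\hookrightarrow C^k\simeq Rat(C^*_{C^*})\hookrightarrow C^*$ already gives (one-sided, hence two-sided) co-Frobenius, then invoke Proposition~\ref{p2}(ii) to reduce the hypotheses to $C^k\simeq C$ and $C^l\simeq C$, and finish with a socle count. This is a perfectly valid and arguably cleaner path, since it avoids tracking the bijection $\varphi$ between left and right indecomposable injectives altogether. The paper's route, on the other hand, is more self-contained in that it does not need to cite Proposition~\ref{p2}(ii) as a black box but rather reproves the relevant multiplicity equality along the way. Either way the content is the same elementary count.
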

\begin{proof}
As in the proof of Proposition \ref{p2} we get $k\cdot n(S)=p(\varphi(S))$ for all $S\in\Ss$. Similarly, using $C^l\simeq (Rat({}_{C^*}C^*))$ in ${}_{C^*}\Mm$ we get $l\cdot p(T)=n(\varphi^{-1}(T))$ for $T\in\Tt$ i.e. $n(S)=l\cdot p(\varphi(S))$. These two equations give $k=l=1$ and the conclusion follows as in Proposition \ref{p2} (ii).
\end{proof}

Let $H$ be a Hopf algebra over a basefield $k$. Recall that a left integral for $H$ is an element $\lambda\in H^*$ such that $\alpha\cdot\lambda=\alpha(1)\lambda$, for all $\alpha\in H^*$. The space of left integrals for $H$ is denoted by $\int_l$. The right integrals and the space of right integrals $\int_r$ are defined by analogy. For basic facts on Hopf algebras we refer to \cite{A}, \cite{DNR}, \cite{M} and \cite{Sw}. The Hopf algebra structure will come into play only through a basic Theorem of Hopf algebras, the fundamental theorem of Hopf modules which yields the isomorphism of right $H$-Hopf modules $\int_l\otimes H\simeq Rat({}_{H^*}H^*)$. This isomorphism is given by $t\otimes h\mapsto t\leftharpoondown h=S(h)\rightharpoonup t$, where for $x\in H$, $\alpha\in H^*$, $x\rightharpoonup\alpha$ is defined by $(x\rightharpoonup\alpha)(y)=\alpha(yx)$ and $\alpha\leftharpoondown x=S(x)\rightharpoonup\alpha$. Yet, we will only need that this is an isomorphism of right $H$-comodules (left $H^*$-modules). Similarly, $H\otimes \int_r\simeq Rat(H^*_{H^*})$. 

\begin{theorem}\label{HC}(Lin, Larson, Sweedler, Sullivan)\\
If $H$ is a Hopf algebra, then the following assertions are equivalent.\\
(i) $H$ is a right co-Frobenius coalgebra.\\
(ii) $H$ is a right QcF coalgebra. \\
(iii) $H$ is a right semiperfect coalgebra.\\
(iv) $Rat({}_{H^*}H^*)\neq 0$.\\
(v) $\int_l\neq 0$.\\
(vi) $\dim \int_l=1$.\\
(vii) The left hand side version of the above.
\end{theorem}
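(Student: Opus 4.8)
The plan is to close one implication cycle through the right-hand conditions (i)--(vi), their left-hand mirrors (which I denote (i$'$)--(vi$'$)), and back, invoking the Hopf structure only through the fundamental theorem of Hopf modules $\int_l\otimes H\simeq Rat({}_{H^*}H^*)$ and $H\otimes\int_r\simeq Rat(H^*_{H^*})$, and only as isomorphisms of comodules. The cheap links come first: (i)$\Rightarrow$(ii) is a one-element coproduct, (ii)$\Rightarrow$(iii) is Corollary \ref{c1}, (vi)$\Rightarrow$(v) is trivial, and each has an obvious left-hand mirror. For (iii)$\Rightarrow$(iv), use $H^*\simeq\prod_{S\in\Ss}E(S)^*$ in ${}_{H^*}\Mm$: right semiperfectness makes every $E(S)$ finite dimensional, hence $E(S)^*$ a finite-dimensional, so rational, left $H^*$-module, whence $\bigoplus_{S\in\Ss}E(S)^*$ is a nonzero rational submodule of $H^*$ and $Rat({}_{H^*}H^*)\neq0$; likewise (iii$'$)$\Rightarrow$(iv$'$).

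Next is the translation (iv)$\Leftrightarrow$(v): by the fundamental theorem $Rat({}_{H^*}H^*)\simeq\int_l\otimes H$ as right $H$-comodules, and since $H\neq0$ this is nonzero exactly when $\int_l\neq0$; symmetrically (iv$'$)$\Leftrightarrow$(v$'$). Then the left--right bridge (v)$\Rightarrow$(i$'$): for $0\neq\lambda\in\int_l$ the map $h\mapsto\lambda\otimes h$ is a monomorphism of right $H$-comodules $H\hookrightarrow\int_l\otimes H\simeq Rat({}_{H^*}H^*)\subseteq H^*$, i.e.\ $H\hookrightarrow H^*$ in ${}_{H^*}\Mm$, which is precisely ``$H$ is left co-Frobenius''; the mirror argument, with $0\neq\mu\in\int_r$ and $h\mapsto h\otimes\mu$ giving $H\hookrightarrow H\otimes\int_r\simeq Rat(H^*_{H^*})\subseteq H^*$ in $\Mm_{H^*}$, yields (v$'$)$\Rightarrow$(i). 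Chaining $(\mathrm{i})\Rightarrow(\mathrm{ii})\Rightarrow(\mathrm{iii})\Rightarrow(\mathrm{iv})\Leftrightarrow(\mathrm{v})\Rightarrow(\mathrm{i}')$ and its mirror $(\mathrm{i}')\Rightarrow(\mathrm{ii}')\Rightarrow(\mathrm{iii}')\Rightarrow(\mathrm{iv}')\Leftrightarrow(\mathrm{v}')\Rightarrow(\mathrm{i})$ shows that (i)--(v) and (i$'$)--(v$'$) are all equivalent, and in particular each one forces $H$ to be co-Frobenius as a coalgebra. Note that bijectivity of the antipode is neither used nor needed here.

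To fold in the uniqueness statements (vi) and (vi$'$), suppose now $H$ is co-Frobenius. Proposition \ref{p2}(ii) gives $H\simeq Rat({}_{H^*}H^*)$ as left $H^*$-modules (equivalently right $H$-comodules), while the fundamental theorem gives $Rat({}_{H^*}H^*)\simeq\int_l\otimes H\simeq H^{(\dim\int_l)}$ as right $H$-comodules; hence $H\simeq H^{(\dim\int_l)}$ in $\Mm^H$. Writing $H\simeq\bigoplus_{T\in\Tt}E(T)^{p(T)}$ with the $E(T)$ the finite-dimensional indecomposable injective right comodules and reading off the socle $\bigoplus_{T\in\Tt}T^{p(T)}$, the isomorphism $H\simeq H^{(\dim\int_l)}$ forces $p(T)=p(T)\cdot\dim\int_l$ for every $T$; since $1\le p(T)<\infty$ this gives $\dim\int_l=1$ (and incidentally shows $\dim\int_l$ is finite). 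The left-hand mirror gives $\dim\int_r=1$, so (v)$\Rightarrow$(vi) and (v$'$)$\Rightarrow$(vi$'$), and all the listed statements are now equivalent.

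The definitional implications and the fundamental-theorem translations are routine, so the real content is concentrated in the last paragraph --- the uniqueness of integrals --- which, thanks to the symmetric characterization of co-Frobenius coalgebras in Proposition \ref{p2}, collapses to comparing multiplicities of indecomposable injective comodules in the single isomorphism $H\simeq H^{(\dim\int_l)}$. The other thing to watch is the consistent bookkeeping of left versus right (co)modules: because the antipode is not yet known to be bijective, one cannot simply transport statements from one side to the other and must route through the (co)representation-theoretic conditions as above. I expect these two points, rather than any single calculation, to be where care is required.
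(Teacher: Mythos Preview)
Your proof is correct, but it takes a different route from the paper at the pivotal step, and the difference is worth noting.

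The paper's argument for $(\mathrm{v})\Rightarrow(\mathrm{i}),(\mathrm{vii})$ runs the isomorphism $H^{(\int_l)}\simeq Rat({}_{H^*}H^*)$ through Theorem~\ref{TQ}: the mere existence of a $\sigma$-isomorphism between $H$ and $Rat(H^*)$ forces $H$ to be QcF on \emph{both} sides at once, after which Propositions~\ref{p2} and~\ref{p3} (applied with $I=\int_l$, $J=\{*\}$, and then with $k=\dim\int_l$, $l=\dim\int_r$) pin down $\dim\int_l=\dim\int_r=1$ and co-Frobeniusness simultaneously. Thus the paper's proof is deliberately an advertisement for the symmetric QcF characterization developed in Section~1.

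You instead bypass Theorem~\ref{TQ} entirely: fixing a single $0\neq\lambda\in\int_l$ gives the embedding $H\hookrightarrow\int_l\otimes H\simeq Rat({}_{H^*}H^*)\subseteq H^*$ in ${}_{H^*}\Mm$, so $H$ is left co-Frobenius \emph{directly}; the right-hand mirror then closes the cycle. For uniqueness you invoke only Proposition~\ref{p2}(ii) (which is the co-Frobenius result of \cite{I}, not the new QcF theorem) and a bare socle count on $H\simeq H^{(\dim\int_l)}$. This is more elementary and self-contained, and it shows that for the specific target Theorem~\ref{HC} one does not actually need the full strength of Theorem~\ref{TQ}. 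What the paper's approach buys is a uniform mechanism: the single hypothesis ``$H$ is $\sigma$-isomorphic to its rational dual'' yields left and right QcF in one stroke, illustrating the paper's thesis that the symmetric characterization is the natural organizing principle. Your approach trades that structural insight for a shorter, more classical argument.
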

\begin{proof}
(i)$\Rightarrow$(ii)$\Rightarrow$(iii) is clear and (iii)$\Rightarrow$(iv) is a property of semiperfect coalgebras (see \cite[Section 3.2]{DNR}). \\
(iv)$\Rightarrow$(v) follows by the isomorphism $\int_l\otimes H\simeq Rat({}_{H^*}H^*)$ and (vi)$\Rightarrow$(v) is trivial.\\
(v)$\Rightarrow$(i), (vi) and (vii). Since $\int_l\otimes H\simeq Rat({}_{H^*}H^*)$ in $\Mm^H$, we have $H^{(\int_l)}\simeq Rat({}_{H^*}H^*)$ so by Theorem \ref{TQ} $H$ is QcF (both left and right); it then follows that $\int_r\neq 0$ (by the left hand version of (ii)$\Rightarrow$(v)) and $H^{(\int_r)}\simeq Rat(H^*_{H^*})$. We can now apply Propositions \ref{p2} and \ref{p3} to first get that $\dim\int_l<\infty$, $\dim\int_r<\infty$ and then that $H$ is co-Frobenius (both left and right) so (i) and (vii) hold. Again by Proposition \ref{p3} we get that, more precisely, $\dim\int_l=\dim\int_r=1$.
\end{proof}

The following corollary was the initial form of the result proved by Sweedler \cite{Sw1}

\begin{corollary}
The following are equivalent for a Hopf algebra $H$:\\
(i) $H^*$ contains a finite dimensional left ideal.\\
(ii) $H$ contains a left coideal of finite codimension.\\
(iii) $\int_l\neq 0$.\\
(iv) $Rat(H^*)\neq 0$.
\end{corollary}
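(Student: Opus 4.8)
The plan is to derive this corollary from Theorem \ref{HC} by showing that each of the four conditions is a paraphrase of one of the equivalent statements already established there. The key observation is that finite-dimensional left ideals of $H^*$, left coideals of $H$ of finite codimension, and nonzero rational submodules of $H^*$ are all tightly linked via the finite topology and the duality between finite-dimensional quotient comodules and finite-dimensional submodules of the dual.

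First I would handle (iii)$\Leftrightarrow$(iv): this is precisely the equivalence (iv)$\Leftrightarrow$(v) of Theorem \ref{HC}, using the isomorphism $\int_l\otimes H\simeq Rat({}_{H^*}H^*)$, which shows $Rat({}_{H^*}H^*)\neq 0$ iff $\int_l\neq 0$. (Here $Rat(H^*)$ is read as the left rational submodule; the right-hand version is symmetric via Theorem \ref{HC}(vii).) Next, for (iii)$\Rightarrow$(i): if $\int_l\neq 0$, then by Theorem \ref{HC} $\dim\int_l=1$, and the one-dimensional space $k\lambda$ spanned by a nonzero left integral is itself a finite-dimensional left ideal of $H^*$ by the defining property $\alpha\cdot\lambda=\alpha(1)\lambda$. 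So (i) holds. For (i)$\Rightarrow$(iv): any nonzero finite-dimensional left ideal $L\subseteq H^*$ is automatically a rational $H^*$-module — a finite-dimensional submodule of the dual $H^*$ is always contained in $Rat(H^*)$, since the annihilator of $L$ in $H$ has finite codimension, so $L$ sits inside the finite dual construction — hence $Rat(H^*)\neq 0$.

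Then I would treat (ii) by the standard orthogonality correspondence: left coideals $V$ of $H$ of finite codimension correspond, via $V\mapsto V^\perp=\{\alpha\in H^*\mid \alpha|_V=0\}$, to finite-dimensional left ideals $V^\perp$ of $H^*$ (the coideal condition $\Delta(V)\subseteq V\otimes H + H\otimes V$ dualizes to $V^\perp$ being a left ideal, and $\dim V^\perp=\operatorname{codim} V<\infty$), and conversely a finite-dimensional left ideal $L$ gives the finite-codimension left coideal $L^\perp\subseteq H$, with $L^{\perp\perp}=L$ by finite-dimensionality. This yields (i)$\Leftrightarrow$(ii) directly. Assembling the implications (iii)$\Rightarrow$(i)$\Rightarrow$(iv)$\Rightarrow$(iii) together with (i)$\Leftrightarrow$(ii) closes the cycle and proves the corollary.

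The main obstacle — really the only point requiring care rather than bookkeeping — is the dualization in the (i)$\Leftrightarrow$(ii) step: one must verify cleanly that the coideal/left-ideal correspondence under $\perp$ respects the finiteness conditions and is a genuine bijection on the relevant finite objects, which rests on the fact that for a finite-dimensional subspace $F$ of $H$ one has $F^{\perp\perp}=F$ and on the compatibility of $\perp$ with the comultiplication. All of this is classical (it is the Hopf-algebra version of the annihilator duality between subcoalgebras and ideals), so I would invoke the standard references \cite{DNR, Sw} for it rather than reprove it. Everything else is an immediate rephrasing of Theorem \ref{HC}.
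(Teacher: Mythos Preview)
Your approach is essentially the same as the paper's: both use the $\perp$-correspondence for (i)$\Leftrightarrow$(ii), observe that any finite-dimensional left ideal of $H^*$ is rational (giving (i)$\Rightarrow$(iv)), and rely on Theorem \ref{HC} for (iii)$\Leftrightarrow$(iv). One small slip to fix: in your parenthetical you write the \emph{two-sided} coideal condition $\Delta(V)\subseteq V\otimes H + H\otimes V$, but a \emph{left} coideal satisfies $\Delta(V)\subseteq H\otimes V$, and it is this condition (not the two-sided one) that dualizes to $V^\perp$ being a left ideal of $H^*$; since you defer to the standard references for this correspondence anyway, the slip does not affect the validity of the plan.
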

\begin{proof}
(i)$\Leftrightarrow$(ii) It can be seen by a straightforward computation that there is a bijective correspondence between finite dimensional left ideals $I$ of $H^*$ and coideals $K$ of finite codimension in $H$, given by $I\mapsto K=I^\perp$. Moreover, it follows that any such finite dimensional ideal $I$ of $H^*$ is of the form $I=K^\perp$ with $\dim(H/K)<\infty$, so $I=K^\perp\simeq (H/K)^*$ is then a rational left $H^*$-module, thus $I\subseteq Rat(H^*)$. This shows that (ii)$\Rightarrow$(iv) also holds, while (iii)$\Rightarrow$(ii) is trivial.
\end{proof}

\subsection*{The bijectivity of the antipode}

Let $t$ be a nonzero left integral for $H$. Then it is easy to see that the one dimensional vector space $kt$ is a two sided ideal of $H^*$. Also, by the definition of integrals, $kt\subseteq Rat({}_{H^*}H^*)=Rat(H^*_{H^*})$ (since $H$ is semiperfect as a coalgebra). Thus $kt$ also has a left comultiplication $t\mapsto a\otimes t$, $a\in H$ and then by the coassociativity and counit property for ${}^Hkt$, $a$ has to be a grouplike element. This element is called the \emph{distinguished grouplike} element of $H$. In particular $t\cdot\alpha=\alpha(a)t,\,\forall\alpha\in H^*$. See \cite[Chapter 5]{DNR} for some more details.

\vspace{.5cm}

For any right $H$-comodule $M$ denote ${}^aM$ the left $H$-comodule structure on $M$ with comultiplication
$$M\ni m\mapsto m^a_{-1}\otimes m^a_{0}=aS(m_1)\otimes m_0$$
($S$ denotes the antipode). It is straightforward to see that this defines an $H$-comodule structure. 

\begin{proposition}
The map $p:{}^aH\rightarrow Rat(H^*)$, $p(x)=x\rightharpoonup t$ is a surjective morphism of left $H$-comodules (right $H^*$-modules).
\end{proposition}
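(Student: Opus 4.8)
The plan is to verify directly that $p$ is a morphism of left $H$-comodules, that its image lands in (and is all of) $Rat(H^*)$, and that it is surjective. First I would recall the relevant structures: the target $Rat(H^*) = Rat({}_{H^*}H^*)$ carries its natural left $H^*$-module structure (equivalently a right $H$-comodule structure), but here we want the \emph{left} $H$-comodule structure; the relevant one on $Rat(H^*)$ comes from the fact that it is also $Rat(H^*_{H^*})$ (as $H$ is semiperfect by Theorem \ref{HC}), so it is an $H$-bicomodule. On the source, ${}^aH$ has the twisted comultiplication $m \mapsto aS(m_1) \otimes m_0$ where we view $H$ with its regular right $H$-comodule structure $h \mapsto h_1 \otimes h_2$. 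Then I would compute $p(x) = x \rightharpoonup t$, recalling $(x\rightharpoonup\alpha)(y) = \alpha(yx)$, and check that the assignment $x \mapsto x\rightharpoonup t$ intertwines the twisted comultiplication on ${}^aH$ with the left comodule structure on $Rat(H^*)$. This is the computational heart: one expands $\rho(x \rightharpoonup t)$ using the formula for the left comodule structure on $Rat(H^*)$, and expands $(aS(x_1) \otimes (x_2 \rightharpoonup t))$, and matches the two using the fact that $t$ is a left integral with $t\cdot\alpha = \alpha(a)t$ and the Hopf algebra axioms (antipode relations, coassociativity). The reader is invited to check this by a $\sigma$-notation manipulation.

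Next I would address that $p$ indeed maps into $Rat(H^*)$: since $x \rightharpoonup t$ is obtained from the integral $t$ (which lies in $Rat(H^*)$, being a one-dimensional ideal contained in the rational part as noted just above the statement) by the left $H$-action $\rightharpoonup$ — and $Rat(H^*)$ is a submodule for the appropriate module structure — this image is rational. More precisely, $x \rightharpoonup (-)$ corresponds to the right $H^*$-module action under which $Rat(H^*_{H^*})$ is closed, so $x\rightharpoonup t \in Rat(H^*)$ for all $x \in H$; thus $p$ is well defined as a map $H \to Rat(H^*)$.

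For surjectivity, the key input is the Fundamental Theorem of Hopf modules, which gives the isomorphism of right $H$-comodules $\int_l \otimes H \simeq Rat({}_{H^*}H^*)$ via $t \otimes h \mapsto S(h) \rightharpoonup t$. Since $\dim \int_l = 1$ by Theorem \ref{HC}(vi), fixing the nonzero integral $t$ this says precisely that the map $H \to Rat(H^*)$, $h \mapsto S(h) \rightharpoonup t$, is bijective. Because the antipode $S$ of $H$ need not a priori be bijective, I cannot immediately replace $S(h)$ by an arbitrary element; but surjectivity of $p$ only needs that $\{x \rightharpoonup t : x \in H\} \supseteq \{S(h) \rightharpoonup t : h \in H\} = Rat(H^*)$, which is immediate since every $S(h)$ is in particular an element $x$ of $H$. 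Hence $p$ is surjective. (In fact this shows $p$ is surjective regardless of any comodule considerations; the comodule-morphism property from the first paragraph is what makes the statement useful.)

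The main obstacle is the first paragraph: getting the twisted comodule structure ${}^aH$ to match, under $p$, the left comodule structure on $Rat(H^*)$. The appearance of the distinguished grouplike $a$ in the definition of ${}^aH$ is exactly what is needed to absorb the relation $t \cdot \alpha = \alpha(a) t$; without it the computation would not close. The antipode enters through the standard identity relating $S$ to the right-multiplication transpose, and one must be careful that only the Hopf algebra axioms — not bijectivity of $S$ — are used, since bijectivity of $S$ is the theorem this proposition is a stepping stone toward. So the delicate point is bookkeeping in Sweedler notation to confirm that the twist by $a$ and the antipode combine correctly, while scrupulously avoiding any use of $S^{-1}$.
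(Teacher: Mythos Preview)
Your proposal is correct and follows essentially the same route as the paper: surjectivity from the fundamental-theorem isomorphism $h\mapsto S(h)\rightharpoonup t$, and the comodule-morphism property by a direct Sweedler computation exploiting $t\cdot\alpha=\alpha(a)\,t$ (the paper carries this computation out explicitly, arriving at $(x\rightharpoonup t)\cdot\alpha=\alpha(aS(x_2))\,(x_1\rightharpoonup t)$). One small slip to fix when you write it up: with the regular right coaction $h\mapsto h_1\otimes h_2$ on $H$, the twisted left coaction is $x\mapsto aS(x_2)\otimes x_1$, so the expression you must match is $aS(x_2)\otimes(x_1\rightharpoonup t)$, not $aS(x_1)\otimes(x_2\rightharpoonup t)$.
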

\begin{proof}
Since the above isomorphism $H\simeq\int_l\otimes H\simeq Rat(H^*)$ is given by $h\mapsto t\leftharpoondown h=S(h)\rightharpoonup t$, we get the surjectivity of $p$. We need to show that $p(x)_{-1}\otimes p(x)_0=x^a_{-1}\otimes p(x^a_0)$ and for this, having the left $H$-comodule structure of $Rat(H^*)$ in mind, it is enough to show that for all $\alpha\in H^*$, $p(x)_0\alpha(p(x)_{-1})=p(x)\cdot\alpha=\alpha(x^a_{-1})p(x^a_0)$. Indeed, for $g\in H$ we have:
\begin{eqnarray*}
((x\rightharpoonup t)\cdot\alpha)(g) & = & t(g_1x)\alpha(g_2) = t(g_1x_1\varepsilon(x_2))\alpha(g_2) \\
 & = & t(g_1x_1)\alpha(g_2x_2S(x_3)) = t(g_1x_1)(\alpha\leftharpoondown x_3)(g_2x_2) \\
 & = & t((gx_1)_1)(\alpha\leftharpoondown x_2)((gx_1)_2) = (t\cdot (\alpha\leftharpoondown x_2))(gx_1) \\
 & = & (\alpha\leftharpoondown x_2)(a)t(gx_1) \;\;\;\;\; (a{\rm\;is\;the\;distinguished\;grouplike\;of\; }H)\\
 & = & \alpha(aS(x_2))(x_1\rightharpoonup t)(g)
\end{eqnarray*}
and this ends the proof.
\end{proof}

Let $\pi$ be the composition map ${}^aH\stackrel{p}{\longrightarrow}Rat(H^*_{H^*})\stackrel{\sim}{\longrightarrow}H\otimes\int_r\simeq H$, where the isomorphism $H\otimes \int_r\simeq H^*_{H^*}$ is the left analogue of $\int_l\otimes H\simeq Rat({}_{H^*}H^*)$. Since ${}^HH$ is projective in ${}^H\Mm$, this surjective map splits by a morphism of left $H$-comodules $\varphi:H\hookrightarrow {}^aH$, so $\pi\varphi={\rm Id}_H$. Then we can find another proof of:

\begin{theorem}
The antipode of a co-Frobenius Hopf algebra is bijective.
\end{theorem}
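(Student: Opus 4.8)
The plan is to derive bijectivity of the antipode $S$ purely from the comodule-theoretic splitting $\varphi: H \hookrightarrow {}^aH$ constructed just above, together with finite-dimensionality of the relevant pieces. First I would unwind what the map $\pi = (\text{iso})\circ p$ actually does on elements: $p(x) = x \rightharpoonup t$, and the isomorphisms $Rat(H^*_{H^*}) \simeq H\otimes\int_r \simeq H$ are the right-handed analogues of the Fundamental Theorem of Hopf modules, so tracing an element through shows $\pi$ is, up to the fixed identification, essentially the map $x \mapsto S(x)$ composed with a unit-translation coming from the distinguished grouplike $a$ (this is exactly the content of the displayed computation in the preceding proposition). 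The upshot I want to extract is: the splitting $\varphi$ provides a left $H$-comodule section of a map built from $S$, so $S$ is (split) injective in a comodule sense.

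Next I would exploit that ${}^aH$ and $H$ carry the \emph{same} underlying vector space, and that the left $H$-comodule map $\varphi: H \to {}^aH$ with $\pi\varphi = \mathrm{Id}_H$ forces $\varphi$ to be injective and $\pi$ surjective as linear maps. The key observation is a dimension/comodule-structure argument analogous to the ones used throughout Section 1: because $H$ is co-Frobenius, hence left and right semiperfect, $H$ decomposes as $\bigoplus_{S\in\Ss}E(S)^{n(S)}$ with all $E(S)$ finite dimensional, and both ${}^aH$ and $H$ are injective cogenerators with the \emph{same} isotype decomposition (twisting by a grouplike and applying $S$ permutes the simple comodules but preserves multiplicities — indeed $S$ is already known to be injective in the co-Frobenius case, or one argues the permutation is a bijection on $\Ss$). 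A split monomorphism between two such comodules that have identical (finite) multiplicities at every isotype must be an isomorphism. Therefore $\varphi$ is an isomorphism of comodules, hence $\pi$ is an isomorphism of comodules, and since $\pi$ is built from $S$ by composing with linear isomorphisms, $S$ itself is bijective as a linear map.

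A cleaner way to package the final step, which I would actually prefer to write: from $\pi\varphi = \mathrm{Id}_H$ we get that $\pi$ is a split epimorphism of finite-type objects; explicitly $H \simeq \mathrm{Im}\,\varphi \oplus \ker\pi$ as left $H$-comodules, and $\ker\pi$ is a comodule whose multiplicities are the differences of two equal families, hence $\ker\pi = 0$. Now $S: H \to H$ is, up to the linear isomorphisms identifying ${}^aH$ with $H$ and $Rat(H^*_{H^*})$ with $H\otimes\int_r \simeq H$ (note $\dim\int_r = 1$ by Theorem \ref{HC}), precisely the map $\pi$ post- and pre-composed with isomorphisms; hence $S$ is bijective. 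For the antipode to be a genuine algebra anti-automorphism inverse one then invokes the standard fact that a bijective antipode automatically has inverse the antipode of $H^{op}$ (or $H^{cop}$), but that is formal and needs no comment here.

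The main obstacle I expect is the bookkeeping in the second paragraph: one must verify carefully that twisting the comodule structure by the grouplike $a$ and applying $S$ genuinely sends the family $\{E(S)^{n(S)}\}$ to a family with the \emph{same} multiset of multiplicities, so that the split mono $\varphi$ is forced to be an iso. Concretely this amounts to checking that the composite operation ``${}^a(-)$'' on finite-dimensional right comodules induces a bijection on $\Ss$ (equivalently: $S$ restricted to the coradical, or the grouplike-twist, permutes the simple subcoalgebras), which uses that $S$ is already known injective for co-Frobenius $H$ — or, to avoid circularity, one notes $a(-)$ is an invertible operation on the coradical filtration with inverse given by the inverse grouplike $a^{-1}$ and the ``antipode of $H^{cop}$'', so multiplicities are preserved without presupposing global bijectivity of $S$. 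Once that permutation-of-isotypes point is nailed down, the rest is the same split-mono-between-equal-socle-length-injectives argument already used to prove Lemma \ref{l1} and Proposition \ref{p2}.
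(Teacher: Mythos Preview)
Your framework (the splitting $\varphi$ of $\pi$) is exactly the paper's, but the way you want to conclude is genuinely different from what the paper does, and the difference is where your argument breaks.

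The paper does \emph{not} try to show that $\varphi$ (or $\pi$) is an isomorphism. Instead it unwinds the single equation expressing that $\varphi$ is a left comodule map,
\[
aS(\varphi(x)_2)\otimes \varphi(x)_1 \;=\; x_1\otimes \varphi(x_2),
\]
applies ${\rm Id}\otimes \varepsilon\pi$ and uses $\pi\varphi={\rm Id}$ and $a=S(a^{-1})$ to read off directly
\[
x \;=\; S\bigl(\varepsilon\pi(\varphi(x)_1)\,\varphi(x)_2\,a^{-1}\bigr),
\]
giving surjectivity of $S$ in one line. No multiplicity count, no Krull--Schmidt, no claim about the comodule structure of ${}^aH$.

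Your route, by contrast, hinges on the assertion that ${}^aH$ and $H$ have the \emph{same} isotype decomposition as left $H$-comodules, so that the split mono $\varphi$ is forced to be an isomorphism. This is where the gap lies. The functor ${}^a(-)$ is built from $S$, and to know it induces a \emph{bijection} on simple isotypes you need an inverse operation. Your proposed inverse uses ``the antipode of $H^{cop}$'', but $H^{cop}$ has an antipode if and only if $S$ is already bijective --- exactly what you are proving. Knowing only that $S$ is injective gives you an injective map on the set of simple subcoalgebras, but that set is in general infinite for a co-Frobenius Hopf algebra, so injectivity does not force surjectivity. For the same reason you cannot assert a priori that ${}^aH$ is an injective left comodule (needed for your Krull--Schmidt step): that again would follow from ${}^a(-)$ being an equivalence, hence from bijectivity of $S$.

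A smaller point: the sentence ``$S$ is precisely $\pi$ pre- and post-composed with isomorphisms'' is not accurate. The correct relation is $p\circ S=(\text{the Hopf-module iso})$, so bijectivity of $p$ (equivalently of $\pi$) would indeed give bijectivity of $S$; but $\pi$ itself is not $S$ up to isomorphisms, and you should state the implication in that direction. In any case, the substantive obstruction is the circularity above: once you try to nail down the ``permutation-of-isotypes'' claim without presupposing $S^{-1}$, the argument stalls. The paper's explicit preimage formula is precisely what circumvents this.
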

\begin{proof}
Since the injectivity of $S$ is immediate from the injectivity of the map $H\ni x\mapsto t\leftharpoondown x\in H^*$, as noticed by Sweedler \cite{Sw1}, we only observe the surjectivity. The fact that $\varphi$ is a morphism of comodules reads $\varphi(x)_{-1}^a\otimes \varphi(x)_0^a=x_1\otimes \varphi(x_2)$, i.e. $aS(\varphi(x)_2)\otimes \varphi(x)_1=x_1\otimes \varphi(x_2)$, and since $a=S(a^{-1})=S^2(a)$, by applying ${\rm Id}\otimes\varepsilon\pi$ we get $S(a^{-1})S(\varphi(x)_2)\varepsilon\pi(\varphi(x)_1)=x_1\varepsilon\pi\varphi(x_2)=x_1\varepsilon(x_2)=x$, so $x=S(\varepsilon\pi(\varphi(x)_1)\varphi(x)_2a^{-1})$.
\end{proof}



\bigskip\bigskip\bigskip

\vspace*{3mm} 
\sc\footnotesize
Miodrag Cristian Iovanov\\
University of Bucharest, Faculty of Mathematics, Str. Academiei 14\\ 
RO-010014, Bucharest, Romania\\
and\\
State University of New York (Buffalo)\\
Department of Mathematics, 244 Mathematics Building\\
Buffalo, NY 14260-2900, USA\\
{\it E--mail address}: {\tt
yovanov@gmail.com}\vspace*{3mm}
\end{document}